\documentclass[11pt,a4paper]{article}

\usepackage[margin=1in]{geometry}
\usepackage{amsmath,amssymb,amsthm,mathtools}
\usepackage{enumitem}
\usepackage{microtype}
\usepackage{cite}
\usepackage{authblk}
\usepackage[
  colorlinks=true,
  linkcolor=blue,
  citecolor=black,
  urlcolor=blue
]{hyperref}

\allowdisplaybreaks
\newtheorem{theorem}{Theorem}[section]
\newtheorem{lemma}[theorem]{Lemma}

\newtheorem{claim}{Claim}
\theoremstyle{remark}

\newcommand{\V}{V}
\newcommand{\E}{E}
\newcommand{\N}{N}
\newcommand{\dd}{d}
\newcommand{\kk}{\kappa}
\newcommand{\ks}{\kappa^{*}}
\newcommand{\ceil}[1]{\left\lceil #1\right\rceil}
\newcommand{\floor}[1]{\left\lfloor #1\right\rfloor}

\hypersetup{
  pdftitle={An Improved Degree Condition for Connectivity-Preserving Spanning (u,v)-Paths},
  pdfauthor={Heng Yang}
}

\title{\bf An Improved Degree Condition for\\
Connectivity-Preserving Spanning $(u,v)$-Paths}
\author{Heng Yang}
\affil{Department of Mathematics, East China Normal University, Shanghai, 200241, China}
\date{August 19, 2026}

\begin{document}
\maketitle
\begin{abstract}
Teng and Tian~\cite{TengTian} proved the following result.  Let $k\ge2$ and
$t\ge3$, and let $G$ be a $k$-connected graph of order $n$.  If
$n\ge6k+1$ and $\delta(G)\ge\ceil{(n+6)/2}$ when $t=3$, while
$n\ge6k+7t-17$ and $\delta(G)\ge\ceil{(n+t+2)/2}$ when $t\ge4$, then, for
any two distinct vertices $u,v$ and every integer $s$ with $1\le s\le t$,
there exist $s$ internally vertex-disjoint $(u,v)$-paths $P_1,\dots,P_s$ whose
union spans $G$ and such that $G-\E(P_1\cup\cdots\cup P_s)$ is $k$-connected.
They asked whether the minimum-degree condition could be lowered to
$\delta(G)\ge\ceil{(n+t)/2}$ for every $t\ge3$.  We answer this question
affirmatively and further reduce the required order to
$n\ge\max\{6k+9-3t,\;2k+t+3\}$.

\medskip
\noindent\textbf{Keywords:} Connectivity; $k$-connected graph; Spanning connectivity; Spanning path

\smallskip
\noindent\textbf{AMS Subject Classification:} 05C40; 05C45
\end{abstract}

\section{Introduction}

Throughout this paper, all graphs are finite, simple and undirected.  For
terminology and notation not defined here, we follow West~\cite{West}.  For a
graph $G$, let $\V(G)$ and $\E(G)$ denote its vertex set and edge set,
respectively, and call $|\V(G)|$ the order of $G$.  Let $\kk(G)$,
$\delta(G)$ and $\Delta(G)$ denote the connectivity, minimum degree and
maximum degree of $G$, respectively.  We say that $G$ is $k$-connected if
$\kk(G)\ge k$.  For $v\in\V(G)$, let $\N_G(v)$ denote its neighborhood and
let $\dd_G(v):=|\N_G(v)|$ be its degree.  For $X\subseteq\V(G)$, let $G[X]$
denote the subgraph of $G$ induced by $X$, and write
$G-X:=G[\V(G)\setminus X]$.  For $F\subseteq\E(G)$, write $G-F$ for the
graph obtained from $G$ by deleting the edges of $F$.

Finding a prescribed substructure whose removal leaves the graph
highly connected is a classical theme in connectivity theory.  In 1972,
Chartrand, Kaugars and Lick proved the following well-known result.

\begin{theorem}[Chartrand, Kaugars and Lick~\cite{ChartrandKaugarsLick}]
Every $k$-connected graph $G$ with
$\delta(G)\ge\floor{3k/2}$ has a vertex $x$ such that $G-x$ is
$k$-connected.
\end{theorem}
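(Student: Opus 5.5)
We prove the Chartrand--Kaugars--Lick theorem by contradiction, using the standard fragment/atom argument. Suppose that for every $x\in\V(G)$ the graph $G-x$ is not $k$-connected. Since $\dd_G(x)\ge\floor{3k/2}\ge k$, the graph $G-x$ has more than $k$ vertices, so it has a separating set $S'$ with $|S'|\le k-1$. Then $S=S'\cup\{x\}$ is a vertex cut of $G$ of size exactly $k$ containing $x$, because $\kk(G)\ge k$. Thus every vertex lies in some minimum cut $S$ of size $k$. For such a cut, call a union of some (not all) components of $G-S$ a \emph{fragment}, with complementary fragment $\bar F=\V(G)\setminus(S\cup F)$.

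Among all pairs $(S,F)$ with $|S|=k$ and $F$ a fragment of $G-S$, choose one with $|F|$ minimum. For any $y\in F$ we have $\N_G(y)\subseteq F\cup S$, so $|F|\ge \delta(G)+1-k\ge\floor{3k/2}+1-k=\floor{k/2}+1$. Now pick a vertex $x\in F$. By the first paragraph there is a $k$-cut $T$ with $x\in T$, and a fragment $D$ of $G-T$ with complement $\bar D$.

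The main step is the standard crossing-cuts lemma. Partition $\V(G)$ via the $3\times3$ grid of $F,S,\bar F$ against $D,T,\bar D$. The four corners are $F\cap D$, $F\cap\bar D$, $\bar F\cap D$ and $\bar F\cap\bar D$. A nonempty corner, say $F\cap D$, is separated from the rest of the graph by $(F\cap T)\cup(S\cap T)\cup(S\cap D)$. This set has size at least $k$ unless the complementary corner $\bar F\cap\bar D$ is empty, in which case counting forces it to be smaller. Using submodularity, $|F\cap T|+|S\cap T|+|S\cap D|$ plus the analogous expression for the opposite corner equals $|S|+|T|=2k$. Hence if two opposite corners are both nonempty, both separators have size exactly $k$. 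Then $F\cap D$ would be a fragment strictly smaller than $F$, since $x\in F\cap T$ is excluded from it, contradicting minimality.

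So we must show that $F\cap D$ or $F\cap\bar D$ is nonempty together with its opposite corner. Swapping $D$ and $\bar D$ if needed, we argue from sizes. If $F\cap D=\emptyset$ and $F\cap\bar D=\emptyset$, then $F\subseteq T$, so $|F|\le k$. The expected obstacle is to rule out the configurations where corners are empty by counting with the degree bound $\floor{3k/2}$. Take $y\in F\cap D$ (or in $F\cap T$ when both corners meeting $F$ are empty). Its neighbours lie in $(F\cup S)\cap(D\cup T)$, which gives $|F\cap D|+|F\cap T|+|S\cap D|+|S\cap T|\ge\floor{3k/2}+1$. Symmetric inequalities hold for the corners on the $\bar D$ side, and for $\bar F$ we use $|\bar F|\ge|F|$. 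Adding two such inequalities and comparing with $|S|+|T|=2k$ forces $|F\cap T|$ and $|S\cap D|$ into a range that contradicts the empty-corner assumption. This is exactly where $\floor{3k/2}$ is needed: it makes $2(\floor{3k/2}+1-k)>k$ hold in the relevant case. The contradiction shows that some $x$ has $G-x$ $k$-connected.
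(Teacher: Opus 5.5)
The paper gives no proof of this theorem; it only cites Chartrand, Kaugars and Lick. Your argument therefore has to stand on its own. The setup is correct: every vertex lies in a $k$-cut, you take an atom $F$, you get $|F|\ge\floor{k/2}+1$, and the crossing count correctly rules out the case where $F\cap D$ (or $F\cap\bar D$) and its opposite corner are both nonempty.

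The gap is what comes after, which is the real content of the proof (Mader's lemma: an atom meeting a $k$-cut $T$ lies inside $T$ and has at most $k/2$ vertices). You reduce to showing that $F\cap D$ or $F\cap\bar D$ is nonempty together with its opposite corner. That cannot succeed, because in the unavoidable case $F\subseteq T$ no such pair exists. There the contradiction must come from an \emph{upper} bound $|F|\le k/2$, and degree counting cannot supply it. Neighbourhood inequalities only bound sizes from below, and all of them are satisfied by the numbers $F\subseteq T$, $|F|=\floor{k/2}+1$, $|S\cap D|=1$, with $|\bar F\cap D|$ and $|\bar F\cap\bar D|$ large. Two further errors: a vertex $y\in F\cap T$ need not have all its neighbours in $(F\cup S)\cap(D\cup T)$, since it may have neighbours in $\bar D$. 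And the minimality facts you actually need are $|D|,|\bar D|\ge|F|$, not $|\bar F|\ge|F|$.

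The missing ingredient is to use $k$-connectivity on the corners of $\bar F$, and minimality on the fragments of $T$.
\begin{itemize}
\item Case $F\subseteq T$ and $\bar F\cap D\neq\emptyset$. The set $(\bar F\cap T)\cup(S\cap T)\cup(S\cap D)$ separates $\bar F\cap D$ from $F$. So it has at least $k=|T|=|F|+|S\cap T|+|\bar F\cap T|$ vertices, which gives $|S\cap D|\ge|F|$.
\item Case $F\subseteq T$ and $\bar F\cap D=\emptyset$. Then $D\subseteq S$, so $|S\cap D|=|D|\ge|F|$ by minimality.
\end{itemize}
The same holds with $\bar D$ in place of $D$, so $k=|S|\ge 2|F|\ge 2\floor{k/2}+2>k$, a contradiction.

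Your remark that the separator $(F\cap T)\cup(S\cap T)\cup(S\cap D)$ of $F\cap D$ ``has size at least $k$ unless $\bar F\cap\bar D$ is empty'' is also backwards. This set misses $\bar D\neq\emptyset$, so it always has at least $k$ vertices when $F\cap D\neq\emptyset$. If $\bar F\cap\bar D=\emptyset$, then $\bar D\subseteq(F\cap\bar D)\cup(S\cap\bar D)$ and $|\bar D|\ge|F|$ give $|S\cap\bar D|\ge|F\cap D|+|F\cap T|>|F\cap T|$. So that separator has fewer than $|S|=k$ vertices, again a contradiction. With these two arguments the case analysis closes.

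Separately, your first step (``$G-x$ has more than $k$ vertices'') needs $k\ge2$. For $k=1$ the graph $K_2$ is a counterexample to the statement as written.
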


Subsequent work considered the simultaneous removal of larger connected
vertex sets.  In particular, Fujita and Kawarabayashi obtained the following
two-vertex extension.

\begin{theorem}[Fujita and Kawarabayashi~\cite{FujitaKawarabayashi}]
Every $k$-connected graph $G$ with
$\delta(G)\ge\floor{3k/2}+2$ contains two adjacent vertices $x,y$ such that
$G-\{x,y\}$ is $k$-connected.
\end{theorem}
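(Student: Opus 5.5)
The plan is a minimal-counterexample style argument on a shortest cut. First fix a pair of adjacent vertices $x,y$ and suppose $G-\{x,y\}$ is not $k$-connected. Then there is a separating set $S$ of $G-\{x,y\}$ with $|S|\le k-1$. Since $G$ is $k$-connected, $T=S\cup\{x,y\}$ is a cut of $G$ with $k\le |T|\le k+1$. Moreover, both $x$ and $y$ must have neighbours in every component of $G-T$; otherwise dropping one of them from $T$ would give a cut of $G$ of size at most $k-1$.

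Among all choices of an edge $xy$, a cut $T$ of this form, and a component $A$ of $G-T$, I choose one minimizing $|A|$. The degree hypothesis gives $|A|\ge \delta-|T|+1\ge\floor{3k/2}+2-(k+1)+1=\floor{k/2}+2\ge 2$. Here I use that a vertex $a\in A$ has all its neighbours in $A\cup T$.

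The key step is to pick a new edge $x'y'$ inside $A$ and argue that it produces a strictly smaller fragment. Since $|A|\ge2$, and $A$ should contain an edge (each vertex of $A$ has at most $|T|\le k+1$ neighbours outside $A$ but degree $\ge k+2$), take $x'y'\in E(G[A])$. If $G-\{x',y'\}$ is $k$-connected we are done. Otherwise it has a cut $T'=S'\cup\{x',y'\}$ with components, one of them being $A'$. I then run the standard crossing analysis of Mader type on the intersections $A\cap A'$, $A\cap B'$, $B\cap A'$, $B\cap B'$ and $T\cap T'$, where $B$ and $B'$ are the unions of the other components. Submodularity $|N(X\cap Y)|+|N(X\cup Y)|\le |N(X)|+|N(Y)|$, combined with the bounds $|T|,|T'|\le k+1$ and $k$-connectivity, should show that some corner, e.g. $A\cap A'$ or $A\cap B'$, is nonempty with neighbourhood of size at most $k+1$. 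I also need that this neighbourhood still contains an edge (a pair of adjacent vertices) that plays the role of $\{x,y\}$. That yields a smaller fragment of the same type, contradicting minimality.

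The main obstacle is this last step. The defining property of the special cut is that it contains an adjacent pair whose removal creates the small separator, and I must preserve it in the new corner. Also, the degree bound $\floor{3k/2}+2$ must be used exactly when corners are small. I would handle it by showing that if a corner $A\cap A'$ has its neighbourhood containing $\{x',y'\}$, it inherits the edge. I would then bound $|T\cap A'|$ and $|T'\cap A|$ by comparing with $|A|$ via the degree condition. The bound is $\delta\ge\floor{3k/2}+2$, which forces every fragment to have more than about $k/2$ vertices. This rules out the degenerate case where the smaller corner is empty and all of $A$ lies in $T'$, because then $|A|\le |T'|-|T'\cap(T\cup B)|$, which contradicts the size lower bound. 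I expect the parity bookkeeping (the floor) to be the delicate part there.
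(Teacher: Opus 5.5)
The paper gives no proof of this statement; it is quoted from Fujita and Kawarabayashi as background. Your proposal therefore has to stand on its own, and it does not. The step you call the main obstacle is the whole theorem, and the tool you name for it cannot deliver it.

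If $\kappa(G)\ge k+1$, every separator of size at most $k+1$ (if any exists) is a minimum separator, and a crossing argument along your lines does go through. The difficulty is $\kappa(G)=k$. Then $|T|$ and $|T'|$ may both be $k+1$, while $k$-connectivity bounds separators below only by $k$, so submodularity leaves a slack of $2$.

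Write $a_1=|A\cap T'|$, $a_2=|B\cap T'|$, $t=|T\cap T'|$, $b_1=|T\cap A'|$, $b_2=|T\cap B'|$. The corners $A\cap A'$ and $A\cap B'$ are separated from $B$ by $(A\cap T')\cup(T\cap T')\cup(T\cap A')$ and $(A\cap T')\cup(T\cap T')\cup(T\cap B')$. These sets have sizes $a_1+t+b_1$ and $a_1+t+b_2$, and they contain $x'y'$ automatically. So inheriting an edge is not the real issue; size is.

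Now take $k=5$, $t=0$, $a_1=4$, $a_2=2$, $b_1=b_2=3$. Then $|T|=|T'|=6$, and:
\begin{itemize}
\item both corners in $B$ have separators of size exactly $k$;
\item both corners in $A$ have separators of size $k+2$;
\item the degree bound gives only $|A\cap A'|,|A\cap B'|\ge\delta-k-1$, which is compatible with all of this.
\end{itemize}
So nothing you invoke produces a corner inside $A$ that is a fragment of a separator of size at most $k+1$. The corners that do have small separators, $B\cap A'$ and $B\cap B'$, need not be smaller than $A$, so minimality of $|A|$ gives no contradiction.

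Your degenerate case fails for the same reason. Suppose $A\subseteq T'$ and both $B\cap A'$ and $B\cap B'$ are nonempty. Then $a_2+t+b_i\ge k$ $(i=1,2)$ together with $|T|,|T'|\le k+1$ yields only $|A|=a_1\le (k+3-t)/2$. For odd $k$ and $t=0$ this equals $\floor{k/2}+2$, which is exactly your lower bound $\delta-|T|+1$ when $\delta=\floor{3k/2}+2$ and $|T|=k+1$. So nothing is contradicted. The parity issue you anticipated is a genuine obstruction, not bookkeeping.

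Separately, your assertion that $x$ and $y$ have neighbours in every component of $G-T$ is false when $|S|=k-1$. In that case $T\setminus\{x\}$ has size $k$, which $k$-connectivity permits.

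What is missing is an idea that copes with non-minimum separators when $\kappa(G)=k$. For instance, you might choose $S$, the edge $x'y'$, or the extremal object more carefully, so that the separators you cross are minimum ones, combined with a sharper use of $\delta(G)\ge\floor{3k/2}+2$ inside the corners. As written, the proposal identifies where the argument must happen but does not supply it.
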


The corresponding edge-deletion problem asks whether the edges of a prescribed
subgraph can be removed while preserving the connectivity of the whole graph.
Related results on preserving connectivity after deleting the edges of a tree
can be found in~\cite{HasunumaTrees,LiuLiuHong,YangTian}.
Hamiltonian cycles are of particular interest in this context.  A classical
theorem of Dirac states that every graph $G$ of order $n\ge3$
with $\delta(G)\ge n/2$ contains a Hamiltonian cycle.  Such a graph is commonly
called a \emph{Dirac graph}.  For Dirac graphs, Hasunuma proved the following
connectivity-preserving result.

\begin{theorem}[Hasunuma~\cite{Hasunuma}]\label{thm:Hasunuma}
For $k\ge2$, every $k$-connected graph $G$ of order $n\ge6k+10$ with
$\delta(G)\ge n/2$ contains a Hamiltonian cycle $C$ such that
$G-\E(C)$ is $k$-connected.
\end{theorem}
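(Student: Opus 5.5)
The plan is to combine a sparse-removal lemma with Dirac-type rerouting. The idea is to find a Hamiltonian cycle $C$ such that, for every vertex $x$, the two cycle edges at $x$ are ``cheap'' to lose. Since $C$ is $2$-regular, deleting $\E(C)$ lowers every degree by exactly $2$. Writing $H:=G-\E(C)$, we get $\delta(H)\ge n/2-2$. The danger is a small vertex cut $S$ of $H$ with $|S|\le k-1$.

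\textbf{Step 1: structure of a cut.} Suppose $S$ is a separator of $H$ with $|S|\le k-1$, and let $A$ and $B$ be the two sides. Every vertex of $A$ has at least $n/2-2-(k-1)$ neighbours in $A\cup S$ inside $H$. Hence $|A|\ge n/2-k$, and likewise $|B|\ge n/2-k$. Since $G$ itself is $k$-connected, $G$ has edges between $A$ and $B$ avoiding $S$. All such edges must lie on $C$. But $C$ crosses between $A$ and $B$ only an even number of times, and by a counting argument at most $2(k-1)+2$ times. So only $O(k)$ $A$–$B$ edges of $G$ exist, and all of them lie on $C$.

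\textbf{Step 2: choose $C$ to avoid such configurations.} Among all Hamiltonian cycles of $G$ (which exist by Dirac's theorem), choose $C$ minimising the number of edges of $C$ that are ``bridging'' edges. Here a bridging edge is one whose endpoints have few common-side neighbours. Concretely, call an edge $e=xy$ \emph{essential} if $G-e$ has a $(\le k)$-edge-near-cut. By Step 1 any bad cut $(A,S,B)$ has both sides of size at least $n/2-k$ and at most $2k$ crossing edges. If $C$ uses a crossing edge $ab$ with $a\in A$ and $b\in B$, we perform a Pósa-type rotation or a $2$-opt exchange: replace $ab$ and another cycle edge $a'b'$ by $aa'$ and $bb'$ (or by $ab'$ and $ba'$), where these new edges lie inside $A$ or inside $B$. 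The degree inside each side is large (at least $n/2-k-2$) and $n\ge 6k+10$, so a pigeonhole count over the successors along $C$ supplies such a pair $a'$, $b'$. This reduces the number of crossing edges used, contradicting minimality. One must check that the exchange does not create a new bad cut. For this we use the potential $\sum_{(A,S,B)}|\E(C)\cap E_G(A,B)|$, or more simply we minimise $|\E(C)\cap\bigcup E_G(A,B)|$ over all small separators simultaneously.

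\textbf{Step 3: conclude.} With $C$ minimal, no bad separator exists. If one did, $C$ would have to cross it at least twice, since $C$ is a Hamiltonian cycle and both sides are nonempty. But then $C$ uses crossing edges, and Step 2 gives an improving exchange. Hence $H$ is $k$-connected.

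The main obstacle is Step 2: guaranteeing that the $2$-opt exchange stays inside the sides, since $C$ must cross the cut at least twice anyway. The resolution I would try is the following. In a bad configuration, $G[A\cup S]$ and $G[B\cup S]$ are themselves highly connected Dirac-like graphs, because their degrees are at least about $n/2-k$ on about $n/2+k$ vertices. We therefore construct $C$ to cross $(A,B)$ only through $S$ when $|S|\ge 2$. Then $C$ contains no $A$–$B$ edges at all, while the $k$-connectivity of $G$ supplies at least $k-|S|$ disjoint $A$–$B$ edges outside $C$, which contradicts the choice of $S$ as a separator of $H$. The bound $n\ge 6k+10$ enters exactly in ensuring that the sides are large enough for these Hamiltonian-path linkages through $S$.
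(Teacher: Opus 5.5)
There is a genuine gap, and it sits at the heart of Step~2. The structural input from Step~1 is false: a bad cut need not have $O(k)$ crossing edges. Take $n$ even, $G[A]\cong G[B]\cong K_{n/2}$, and let the $A$--$B$ edges of $G$ form a Hamiltonian cycle $C$ alternating between $A$ and $B$. Then $\delta(G)=n/2+1$ and $\kappa(G)=n/2\ge k$, yet $G-E(C)$ is disconnected with $S=\emptyset$, and $C$ uses $n$ crossing edges. (The parity claim also fails once $C$ passes through $S$.)

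More seriously, the potential you minimise is not well defined. The bad separators are separators of $G-E(C)$, so they change with $C$. If you instead range over all triples $(A,S,B)$ with $|S|\le k-1$ and large sides, every edge of $G$ crosses some triple, and the quantity is identically $n$. The point you flag (``one must check that the exchange does not create a new bad cut'') is the whole difficulty, and it is left open. The existence of the $2$-opt pair ($aa',bb'\in E(G)$ with $a'b'$ a suitably oriented crossing edge) is also not established. Your final ``route $C$ through $S$'' fix treats one separator at a time and gives no control over separators of the new $G-E(C)$. It is also impossible when $|S|\le1$, since then every Hamiltonian cycle must use an $A$--$B$ edge.

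The missing idea is to avoid local surgery on $C$ altogether. The paper quotes this theorem without proof, but its proof of Theorem~\ref{thm:main} (Claims~\ref{clm:components}--\ref{clm:connectivity}) is the template, and specialised to a Hamiltonian cycle it yields exactly $n\ge 6k+10$. The steps are:
\begin{enumerate}
\item Start from any Hamiltonian cycle $C_0$. If $G-E(C_0)$ has a separator $W$ with $|W|=k'<k$, the degree bound forces exactly two components $G_1,G_2$ with $n/2-k'-1\le |V(G_i)|\le n/2+1$. Each vertex of $G_i$ has at most two $G$-neighbours in $G_{3-i}$, namely along $C_0$.
\item Reserve $k$ disjoint minimal $G_1$--$G_2$ paths $Q_1,\dots,Q_k$ by Menger. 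Their union $B$ has $\Delta(B)\le2$, at most $k-1$ vertices of degree $2$ (all in $W$) and $2k$ of degree $1$.
\item P\'osa's criterion (Lemma~\ref{lem:Posa}) then gives a Hamiltonian cycle $C$ of $G-E(B)$.
\item Check $k$-connectivity of $G-E(C)$ directly against the fixed partition $(G_1,W,G_2)$ and each $X$ with $|X|\le k-1$. Each $x\in V(G_i)$ keeps at least $n/2-k-3$ neighbours inside $G_i$. This is at least $(|V(G_i)|+k-2)/2$ because $3\delta(G)\ge n+3k+5$, which follows from $n\ge 6k+10$. Hence $G_i-X$ stays connected. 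Some $Q_j$ avoids $X$ and survives, since $E(C)\cap E(B)=\emptyset$, and the vertices of $W$ attach by degree.
\end{enumerate}
Because every $X$ is checked directly against one fixed partition, no new cut has to be ruled out, and the obstacle in your Step~2 never arises.
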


For distinct vertices $u,v\in\V(G)$, a collection of internally
vertex-disjoint $(u,v)$-paths is called a \emph{spanning $(u,v)$-path system}
if the union of their vertex sets is $\V(G)$.  The \emph{spanning
connectivity} $\ks(G)$ is the largest integer $q$ such that, for every
$1\le s\le q$ and every pair of distinct vertices $u,v\in\V(G)$, the graph
$G$ contains a spanning system of $s$ internally vertex-disjoint
$(u,v)$-paths.  The case $s=1$ is a Hamiltonian $(u,v)$-path, while the case
$s=2$ is equivalent to a Hamiltonian cycle.  Thus spanning path systems place
these two familiar Hamiltonian structures in a common framework.

For Hamiltonian paths, Teng and Tian obtained the following result.

\begin{theorem}[Teng and Tian~\cite{TengTian}]\label{thm:ham-path}
Let $k\ge2$, and let $G$ be a $k$-connected graph of order $n\ge6k+6$ with
$\delta(G)\ge\ceil{(n+1)/2}$.  Then, for every pair of distinct vertices
$u,v\in\V(G)$, there exists a Hamiltonian $(u,v)$-path $P$ such that
$G-\E(P)$ is $k$-connected.
\end{theorem}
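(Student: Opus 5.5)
Throughout, let $u,v$ be the two prescribed vertices, and call a vertex \emph{heavy} in $G$ if its degree is large compared with $k$. The plan follows the strategy of Hasunuma's proof of Theorem~\ref{thm:Hasunuma}, adapted to a path with fixed ends. The idea is to reserve in advance a sparse $k$-connected spanning structure $H\subseteq G$ whose edges the Hamiltonian path must avoid. We then find the path inside $G-\E(H)$; since $H$ survives the deletion of $\E(P)$, the graph $G-\E(P)$ is automatically $k$-connected.

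\textbf{Step 1: the reserved subgraph.} Using the $k$-connectivity of $G$ and Mader-type minimal $k$-connected subgraph bounds, choose a spanning $k$-connected subgraph $H$ of $G$ that is edge-minimal. Such a graph has few vertices of degree larger than $k$. Moreover, the critical vertices of a minimally $k$-connected graph induce a forest, so all but at most about $2k$ vertices satisfy $\dd_H(x)= k$.

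\textbf{Step 2: the Hamiltonian path in the remainder.} Set $G'=G-\E(H)$. For a typical vertex $x$ we have $\dd_{G'}(x)\ge\ceil{(n+1)/2}-k$, but this is not a Dirac-type bound, so a direct Ore/Dirac argument on $G'$ fails. Instead, I would argue by contradiction. Take a longest $(u,v)$-path $P$ in $G'$, or more generally a maximal system of $(u,v)$-paths. We then try to absorb the remaining vertices by Pósa-type rotations, using the edges of $G$ rather than of $G'$. The slack $\ceil{(n+1)/2}$, i.e.\ $+1/2$ over Dirac, is exactly what is needed for Hamiltonian-connectedness in $G$ (Ore's theorem for Hamiltonian-connected graphs requires $\dd(x)+\dd(y)\ge n+1$). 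Edges of $H$ used by $P$ must then be swapped out. Whenever $P$ uses an edge $xy\in\E(H)$, we replace $H$ by $H-xy+e$ for some other edge $e$ of $G-\E(P)$ restoring $k$-connectivity. This works because any cut of $H-xy$ is a $(k-1)$-separator, and by the high minimum degree the sides of such a separator have size about $n/2-k$ each. The sides are therefore joined by many edges of $G$, and $n\ge 6k+6$ guarantees that one of them lies off $P$.

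\textbf{Step 3: the counting that closes the argument.} The expected main obstacle is Step 2's exchange argument near the low-degree configurations. One must show that a vertex $x$ whose $G$-neighbours are mostly consumed by $P$ and $H$ cannot exist. I would first try the standard crossing-edge count. Suppose $x\notin V(P)$, or some $H$-edge cannot be exchanged. Then the neighbours of $x$ on $P$, together with their successors, are non-adjacent to $x$ or to $v$. This gives $\dd_G(x)+\dd_G(y)\le n$ plus an error of $O(k)$ coming from the at most $2k$ high-degree vertices of $H$ and from $u,v$. The degree bound $\dd_G(x)+\dd_G(y)\ge n+1$ leaves only a surplus of $1$, so the $O(k)$ error cannot be absorbed by degree alone. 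It must be absorbed by the order assumption $n\ge6k+6$, via a count showing that the bad configurations force a separator of size less than $k$ or a set of more than $n/2$ vertices with few external edges. The delicate part is making the constant exactly $6k+6$, and this is where I expect the most case analysis.
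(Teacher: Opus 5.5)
There is a genuine gap: the mechanism of Steps~1--2 fails, and Step~3 gives no argument. The paper itself only quotes this theorem from Teng and Tian, so the relevant comparison is with the scheme it uses for Theorem~\ref{thm:main}.

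\textbf{Why your reservation and exchange fail.} A spanning $k$-connected $H$ costs \emph{every} vertex at least $k$ edges. Your structural claim is also false. Mader's theorem only says that the vertices of degree $>k$ in a minimally $k$-connected graph induce a forest, and that forest may contain a positive fraction of all vertices. Moreover $\Delta(H)$ can be as large as $n-k$, as in $K_{k,n-k}$. So ``all but about $2k$ vertices have $\dd_H(x)=k$'' does not hold.

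As you concede, $G-\E(H)$ then satisfies no usable degree condition, so everything rests on the exchange $H\mapsto H-xy+e$, and that exchange is false. The sides of a $(k-1)$-separator of the sparse graph $H-xy$ can be tiny, since you are invoking $\delta(G)$ rather than $\delta(H)$. Even large sides need not be joined by many edges of $G$. For $k\ge4$, take:
\begin{itemize}
\item disjoint cliques $C_1,C_2$ of order $m$;
\item a clique $X$ of order $k-1$, joined completely to $C_1\cup C_2$;
\item one extra edge $c_1c_2$ with $c_i\in C_i$.
\end{itemize}
Then $n=2m+k-1$ and $\delta(G)=m+k-2\ge\ceil{(n+1)/2}$. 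Also $G$ is $k$-connected, and $n\ge6k+6$ for large $m$. But $X$ separates $G-c_1c_2$.

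Consequently $c_1c_2$ lies in every spanning $k$-connected subgraph. The sides $C_1,C_2$ have order about $n/2$, yet they are joined by a single edge of $G$. For any Hamiltonian $(u,v)$-path $P$ through $c_1c_2$, which your rotations in $G$ may well produce, $G-\E(P)$ has the $(k-1)$-separator $X$, and no exchange is possible. A bad path cannot be repaired afterwards; it must be prevented. Step~3 only asserts that some count absorbs the $O(k)$ error, so neither an argument nor a source for the constant $6k+6$ is given.

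\textbf{The missing idea.} Make the reservation adaptive and local, as in Hasunuma's proof and the paper's proof of Theorem~\ref{thm:main}.
\begin{enumerate}
\item Start from an arbitrary Hamiltonian $(u,v)$-path $P_0$, which Ore's condition provides.
\item If $G-\E(P_0)$ is not $k$-connected, take a minimum separator $W$ with $|W|=k'<k$. Since each vertex loses at most two edges, $G-\E(P_0)-W$ has exactly two components $G_1,G_2$, each of order roughly $n/2$.
\item Menger's theorem gives $k$ disjoint $G_1$--$G_2$ paths $Q_1,\dots,Q_k$ with ends in $\V(G_i)\setminus\{u,v\}$ and interiors in $W\cup\{u,v\}$. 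Then $B=Q_1\cup\cdots\cup Q_k$ has at most $k+1$ vertices of $B$-degree $2$ and exactly $2k$ of $B$-degree $1$.
\item With $q=\floor{n/2}$, this gives $\psi_{q-1}(G-\E(B))\le k+1$ and $\psi_q(G-\E(B))\le3k+1$. Lemma~\ref{lem:HC} requires $3k+1<q-1$, i.e.\ $n\ge6k+6$; this is exactly where the order bound comes from.
\item Take a Hamiltonian $(u,v)$-path $P$ in $G-\E(B)$; it leaves every $Q_j$ intact. Each $G_i$ stays internally dense, and any set of $k-1$ vertices misses some $Q_j$. The vertices of $W$ keep many neighbours in $G_1\cup G_2$. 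Hence $G-\E(P)$ is $k$-connected, exactly as in Claim~\ref{clm:connectivity}.
\end{enumerate}
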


For spanning $(u,v)$-path systems, Teng and Tian also obtained the following
result.

\begin{theorem}[Teng and Tian~\cite{TengTian}]\label{thm:TengTian}
Let $k\ge2$ and $t\ge3$, and let $G$ be a $k$-connected graph of order $n$
satisfying
\[
\begin{cases}
 n\ge 6k+1,\quad
 \delta(G)\ge \ceil{\dfrac{n+6}{2}}, & t=3,\\[2mm]
 n\ge 6k+7t-17,\quad
 \delta(G)\ge \ceil{\dfrac{n+t+2}{2}}, & t\ge4.
\end{cases}
\]
Then, for every pair of distinct vertices $u,v\in\V(G)$ and every integer $s$
with $1\le s\le t$, there exist $s$ internally vertex-disjoint
$(u,v)$-paths $P_1,\dots,P_s$ whose union spans $G$ and such that
$G-\E(P_1\cup\cdots\cup P_s)$ is $k$-connected.
\end{theorem}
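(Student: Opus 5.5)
The natural strategy is to split the graph into two edge-disjoint spanning subgraphs: a sparse spanning $(u,v)$-path system, and a $k$-connected remainder. So I would first choose a small $k$-connected spanning subgraph $H\subseteq G$ whose maximum degree is controlled. Then I would find the $s$ paths inside $G':=G-\E(H)$, which still has high minimum degree. Since $G$ is $k$-connected, a standard sparsification (Nagamochi--Ibaraki/Mader type) gives a spanning $k$-connected subgraph with few edges. With some extra care it gives a $k$-connected spanning subgraph $H$ with $\Delta(H)$ bounded, except at a small set of high-degree vertices. Removing $\E(H)$ then costs each vertex only a bounded number of incident edges. For Dirac-type graphs it is cleaner to build $H$ directly. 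Take $k$ internally disjoint short paths between the pieces of a bipartition, or use a greedy argument with $\delta(G)\ge n/2$. The aim is that almost every vertex loses at most about $k$ edges. Any remaining heavy vertices are handled separately by routing the path system through them early.

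Next I need a degree-based criterion for a spanning $(u,v)$-path system. The plan is to use the known fact that a graph of order $n$ with $\delta\ge (n+s)/2$, or more precisely with Ore-type degree sum $\ge n+s$ on nonadjacent pairs, has $\ks\ge s$. This generalizes Ore's theorem to $k^*$-connectivity (Lin--Tan--Hsu--type results), and I would quote or reprove it via a closure argument. Applied to $G'$, this forces the stronger hypothesis $\delta(G)\ge\ceil{(n+t+2)/2}$ plus slack for the edges of $H$. That slack is exactly what the $n\ge 6k+7t-17$ requirement provides. I would therefore not delete $H$ blindly. Instead I would build the path system first with degree condition $\ceil{(n+t+2)/2}$, and then repair.

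So the ordering is as follows. Start with a spanning system $P_1,\dots,P_s$ given by the Ore-type result. If $G-\E(\bigcup P_i)$ fails to be $k$-connected, take a separator $S$ with $|S|\le k-1$ and look at the components. Every vertex has degree at most $2$ in the path system, except $u,v$, which have degree $s$. Hence each vertex keeps degree at least $\delta-2$ (or $\delta-s$ at $u,v$) in the remainder. With $\delta>n/2+1$, each component of the remainder minus $S$ is therefore large: at least $\delta-2-(k-1)$ vertices. So there are exactly two components $A,B$, both of size about $n/2$. The original $G$ has many $A$--$B$ edges, because $|A|,|B|<\delta$ and so each vertex has neighbours on the other side. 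All but at most $k-1$ of these cross edges must lie on the paths. I would then perform rotation/exchange operations (Pósa-type switches) on the path system to move cross edges off the paths while keeping the system spanning. Each such switch strictly increases the number of $A$--$B$ edges in the remainder. The potential function is the number of cross edges not used by the paths, and it increases until the remainder is $k$-connected.

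The main obstacle is this last exchange step. Each switch needs a pair of adjacent "rotation" vertices whose new edges lie within $A$ or within $B$, and such pairs must not create a cycle or disconnect a path from $u$ or $v$. The degree surplus of $t+2$ over $n/2$ guarantees about $t$ extra neighbours, and the order bound $n\ge 6k+7t-17$ guarantees that the sides contain enough vertices outside $S$ and away from the $2s$ path ends near $u,v$. I would first try counting, for a fixed cross edge $xy$ on some $P_i$, the successors of neighbours of $x$ inside its own side. Then I would show that the pigeonhole principle gives a successor adjacent to $y$'s side appropriately, because that set exceeds $|S|+2s$. This is where the precise constants would be fixed.
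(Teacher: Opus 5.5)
\textbf{There is a genuine gap: the exchange step.} This step carries the whole difficulty, you leave it open, and the mechanism you sketch for it cannot work.

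First, the cut is sparse, not rich. Since $A$ and $B$ are components of $(G-E(P))-S$, \emph{every} $A$--$B$ edge of $G$ lies on the paths, not ``all but $k-1$'' of them. So a vertex $y\in B\setminus\{u,v\}$ has at most one neighbour in $A$ besides $x$, contrary to your ``many $A$--$B$ edges''.

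Now look at the switch you describe, a $2$-exchange. To remove the path edge $xy$ it deletes one further path edge $bb'$ and joins $x$ and $y$ to its ends, say by $xb$ and $yb'$. For neither new edge to be an $A$--$B$ edge you need $b\in A\cup S$ and $b'\in B\cup S$. Then $bb'$ is itself a crossing of the cut or one of the few path edges at $S$. So the path-neighbours of $N_A(x)$ that are adjacent to $y$ are essentially confined to these few crossing points. No comparison of $|N_A(x)|\approx\delta(G)-k$ with $|S|+2s$ produces a useful switch. A generic switch merely trades $xy$ for another $A$--$B$ path edge, and your potential does not move. The only moves that help cancel $xy$ against another crossing or reroute it through $S$, consuming edges at $S$.

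Second, even if the number of free cross edges could be raised, it is the wrong invariant. $k$-connectivity of the remainder needs $k$ vertex-disjoint routes between the sides, so that one survives any deletion of $k-1$ vertices. In general some of these routes pass through $S$ or $u,v$. A count of free cross edges neither certifies disjointness nor protects the edges at $S$.

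\textbf{The missing idea: reserve rather than repair, and reserve only what the cut needs.} This is how the paper gets the statement, as a special case of Theorem~\ref{thm:main}, whose hypotheses are weaker.

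\begin{enumerate}[label=\textup{(\arabic*)},leftmargin=2.2em]
\item The failing initial system $P_0$ is used solely to produce a minimum separator $W$ of $G-E(P_0)$ and its two components $G_1,G_2$.
\item Menger's theorem in $G$, applied to $A_i=V(G_i)\setminus\{u,v\}$, gives $k$ disjoint paths $Q_1,\dots,Q_k$ whose interiors lie in $W\cup\{u,v\}$. Hence $B=Q_1\cup\dots\cup Q_k$ has $\Delta(B)\le 2$ and at most $k+1$ vertices of $B$-degree $2$. This is far cheaper than your sparse spanning $k$-connected subgraph, which costs every vertex at least $k$ edges against a degree surplus that does not grow with $k$.
\item $P_0$ is discarded and a new system is built inside $G-E(B)$. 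P\'osa's degree-sequence criterion absorbs the few vertices losing two edges. It yields a Hamiltonian cycle of $(G-E(B))-R$, or of $((G-E(B))-R)-uv$, where $R$ consists of common neighbours of $u,v$ supplying the short paths $uxv$. The extremal case $uv\in E(G)$, $2\delta(G)-n=t$ needs a separate path $uazv$.
\item Connectivity then needs no exchange at all. Every internal vertex loses exactly two edges to \emph{any} spanning system. So after deleting any $X$ with $|X|\le k-1$, the vertices of each $G_i$ other than $u,v$ remain in one component. This is where the order bound is used, through $3\delta(G)\ge n+3k+5$, not as slack for deleted edges. Finally, some $Q_j$ avoids $X$ and survives intact, because the new paths avoid $E(B)$.
\end{enumerate}
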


At the end of their paper, Teng and Tian~\cite{TengTian} also asked whether
the degree condition in Theorem~\ref{thm:TengTian} could be replaced by
$\delta(G)\ge\ceil{(n+t)/2}$ for every $t\ge3$.

In this paper, we answer this question affirmatively while also improving the
order condition.

\begin{theorem}\label{thm:main}
Let $k\ge2$ and $t\ge3$, and let $G$ be a $k$-connected graph of order $n$
satisfying
\[
 \begin{cases}
  n\ge6k+9-3t, & 3\le t\le k+1,\\[1mm]
  n\ge2k+t+3, & t\ge k+2,
 \end{cases}
\]
and
\begin{equation}\label{eq:degree-main}
 \delta(G)\ge\ceil{\frac{n+t}{2}}.
\end{equation}
Then, for every pair of distinct vertices $u,v\in\V(G)$ and every integer
$s$ with $1\le s\le t$, there exist $s$ internally vertex-disjoint
$(u,v)$-paths $P_1,\dots,P_s$ such that
$\V(P_1\cup\cdots\cup P_s)=\V(G)$ and
$\kk\!\left(G-\E(P_1\cup\cdots\cup P_s)\right)\ge k$.
\end{theorem}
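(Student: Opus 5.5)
We plan to construct the path system greedily, making $H=P_1\cup\cdots\cup P_s$ "thin" and spread out, so that $G-\E(H)$ keeps minimum degree far above $k$ and every small vertex cut of $G-\E(H)$ can be ruled out by a counting argument. Every vertex of $H$ other than $u,v$ has degree $2$ in $H$, while $u$ and $v$ have degree $s\le t$. Hence, writing $G'=G-\E(H)$, we have $\delta(G')\ge \delta(G)-t\ge \ceil{(n+t)/2}-t$ at $u,v$ and $\ge\ceil{(n+t)/2}-2$ elsewhere. This is exactly why the threshold $\ceil{(n+t)/2}$ is natural: it leaves $\dd_{G'}(u),\dd_{G'}(v)\ge \ceil{(n-t)/2}$. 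The order conditions $n\ge 6k+9-3t$ (for $t\le k+1$) and $n\ge 2k+t+3$ are designed so that this quantity is comfortably at least $k$ plus slack.

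\textbf{Step 1 (existence).} We first show that $G$ has a spanning $(u,v)$-path system with exactly $s$ paths. Since $\delta(G)\ge (n+t)/2$, every pair of vertices has at least $t$ common neighbours. So we may take $s$ internally disjoint short paths through $u$ and $v$, for instance $u$--$w_i$--$v$ with common neighbours $w_i$; when $u\sim v$ we may also use the edge $uv$. We then absorb the remaining vertices into $P_1$ by the standard Ore/Pósa rotation argument applied to $G-\bigcup_{i\ge2}\V(P_i)$, whose minimum degree is still at least half its order plus one. This yields a Hamiltonian path between the endpoints of $P_1$. Alternatively, we can treat $G$ as Hamilton-connected and split a Hamiltonian $(u,v)$-path using chords at $u$ and $v$.

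\textbf{Step 2 (connectivity of $G'$).} Suppose $G'$ has a separator $S$ with $|S|\le k-1$, and let $A$ be a smallest component of $G'-S$, with $B$ the rest. The key count is as follows. For $x\in A$ we have $\N_G(x)\subseteq A\cup S\cup B$, and at most $\dd_H(x)$ of the edges from $x$ to $B$ lie in $H$, so $\dd_G(x)\le |A|-1+|S|+\dd_H(x)$. This forces $|A|\ge \ceil{(n+t)/2}-(k-1)-t+1$ if $A$ contains $u$ or $v$, and $|A|\ge \ceil{(n+t)/2}-k-1$ otherwise. Similarly $|B|$ is large, so $|A|+|B|+|S|>n$ unless the order bound fails. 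The sum of the two lower bounds exceeds $n-k+1$ precisely when $n$ and $t$ satisfy the stated inequalities. Accordingly we split into the cases $u,v\in A$; $u\in A$, $v\in B$; and $u,v\in S\cup B$.

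\textbf{Main obstacle.} The dangerous case is $u\in A$, $v\in B$ with $s$ close to $t$. Here both sides lose up to $t$ edges at the special vertices, and the counting is tight when $t\le k+1$, which is where $6k+9-3t$ enters. To handle it, we will exploit freedom in Step 1. We choose the system to minimise the number of $H$-edges crossing any potential small cut, or choose $H$ so that $u$ and $v$ send their $s$ path-edges into vertices of high common neighbourhood. We then argue that every $H$-edge between $A$ and $B$ can be matched with one of the at least $t$ common neighbours of its endpoints, producing $k$ internally disjoint $A$--$B$ paths in $G'$ avoiding $S$. If a cut survives, we modify $H$ locally by a rotation that swaps a crossing edge for a non-crossing one, using the abundant common neighbours, and we show that a suitably chosen extremal system admits no such cut.
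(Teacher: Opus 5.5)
Your Step~2 has a genuine gap, and it is not confined to the case $u\in A$, $v\in B$. Take a vertex of each side other than $u,v$; it has at most two crossing $H$-edges. Even in this sharp form your count gives only $|A|,|B|\ge\delta(G)-|S|-1$, hence $|S|\ge2\delta(G)-n-2\ge t-2$. This rules out separators of size at most $k-1$ only when $t\ge k+2$. For $3\le t\le k+1$ it yields nothing, for every $n$, because both sides of the inequality grow with $n$. So your assertion that the two bounds sum to more than $n-k+1$ ``precisely when'' the order hypotheses hold is false, and $n\ge6k+9-3t$ cannot be what rescues the tight range.

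In fact an arbitrary spanning system can fail for arbitrarily large $n$. Let $X,Y$ be cliques of even order $m$ joined by a perfect matching $\{x_iy_i\}$, and add three vertices adjacent to all of $X\cup Y$. Then $n=2m+3$, $\delta(G)=m+3=\lceil(n+3)/2\rceil$ and $\kappa(G)=m+3$. So the hypotheses hold with $t=3$ for every $k\ge4$ with $6k\le n$. Yet the Hamiltonian cycle $x_1y_1y_2x_2x_3y_3y_4x_4\cdots$, with the three extra vertices inserted into non-matching edges, uses every matching edge. For any $u,v$ its two $(u,v)$-arcs therefore leave a $3$-separator in $G-E(H)$.

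Everything thus rests on your last paragraph, which is not an argument. ``Minimise the number of $H$-edges crossing any potential small cut'' is not a well-defined potential. Nothing shows that a crossing-reducing swap exists, or that it cannot create a new small cut elsewhere. The common-neighbour matching cannot work either. A common neighbour $c\in A\cup B$ of the ends of a crossing edge $ab$ yields another crossing edge $cb$ or $ac$, which must itself lie in $H$. Common neighbours in $S$ route through $S$. This only reproduces $|S|\ge t-2$.

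The missing idea is to locate the near-bipartition first and protect crossing connections before building the system. The paper proceeds as follows.
\begin{enumerate}
\item Take any initial system $P_0$ (from Lin--Huang--Hsu). If $G-E(P_0)$ has a minimum separator $W$ with $|W|<k$, use $3\delta(G)\ge n+3k+5$ (the main use of $n\ge6k+9-3t$) to show that $G-E(P_0)-W$ has exactly two components $G_1,G_2$, with $\delta(G)-|W|-1\le|V(G_i)|\le n-\delta(G)+1$.
\item Fix $k$ vertex-disjoint $G_1$--$G_2$ paths $Q_1,\dots,Q_k$ in $G$ by Menger, with ends off $u,v$. Their union has maximum degree at most $2$ and at most $k+1$ vertices of degree $2$.
\item Rebuild the spanning system inside $G-E(Q_1\cup\cdots\cup Q_k)$ via P\'osa- and Chartrand--Kapoor--Kronk-type degree-sequence criteria. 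There is a delicate subcase when $uv\in E(G)$, $s=t$ and $2\delta(G)-n=t$.
\item Check connectivity of the final $G-E(P)$ against the \emph{fixed} partition $G_1,G_2,W$. Each $G_i$ keeps internal degree at least $(|V(G_i)|+k-2)/2$ at its vertices other than $u,v$, so its non-$\{u,v\}$ part stays connected after deleting fewer than $k$ vertices. Some $Q_j$ avoids the deleted set and links the two sides. Finally $W$, $u$, $v$ attach by degree.
\end{enumerate}
Working relative to this one partition is what sidesteps the ``new cut elsewhere'' problem that your local-swap strategy would have to solve.
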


Since
\[
 6k+9-3t\ge2k+t+3
 \quad\Longleftrightarrow\quad t\le k+1,
\]
the order hypothesis is equivalent to
\begin{equation}\label{eq:order-main}
 n\ge\max\{6k+9-3t,\;2k+t+3\}.
\end{equation}
For convenience, we use either equivalent form as needed below.

The paper is organized as follows.  Section~2 presents preliminary results on
spanning connectivity and Hamiltonicity.  The proof of our main result is
given in Section~3.  Section~4 concludes the paper.

\section{Preliminaries}

For a graph $G$ and a nonnegative integer $j$, write
$\psi_j(G):=\bigl|\{x\in\V(G):\dd_G(x)\le j\}\bigr|$.

We first recall the spanning-connectivity bound used to obtain an initial
path system.

\begin{lemma}[Lin, Huang and Hsu~\cite{LinHuangHsu}]\label{lem:spanning}
Let $G$ be a graph of order $n$ satisfying
$\frac{n}{2}+1\le\delta(G)\le n-2$.  Then
$\ks(G)\ge2\delta(G)-n+2$.
\end{lemma}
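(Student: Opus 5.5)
The statement just above is Lemma~\ref{lem:spanning}, a result of Lin, Huang and Hsu which this paper quotes from the literature. Here is how I would prove it directly. Write $\delta=\delta(G)$ and $q=2\delta-n+2$. Fix distinct vertices $u,v$ and an integer $s$ with $1\le s\le q$. The goal is a spanning system of $s$ internally disjoint $(u,v)$-paths.

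The plan is induction on $s$, reducing to a Hamiltonian-path statement. The base case $s=1$ asks for a Hamiltonian $(u,v)$-path. Since $\delta\ge n/2+1$, any two nonadjacent vertices have degree sum at least $n+2$. The classical Ore-type theorem says that a graph with $\sigma_2\ge n+1$ is Hamiltonian-connected, so the base case holds.

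For $s\ge 2$, first use the fact that $u$ and $v$ have many common neighbours: $|N(u)\cap N(v)|\ge 2\delta-n\ge s-2$. Choose $s-2$ common neighbours $w_1,\dots,w_{s-2}$, and make each $u w_i v$ a path of length two. Remove $W=\{w_1,\dots,w_{s-2}\}$ to obtain $H=G-W$. Then $H$ has order $n'=n-s+2$ and $\delta(H)\ge \delta-s+2$.

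In $H$, two further internally disjoint $(u,v)$-paths covering $V(H)$ are needed, which amounts to a Hamiltonian cycle through $u$ and $v$. If $u$ and $v$ are adjacent, an alternative is a Hamiltonian $(u,v)$-path in $H$ together with the edge $uv$. The case $s=2$ means the paths together form a Hamiltonian cycle containing both $u$ and $v$ but not passing through the edge $uv$ as one of the paths unless that is allowed. One has to be careful here: a single edge $uv$ counts as a valid path.

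The degree check in $H$ goes as follows. We have $2\delta(H)-n'\ge 2\delta-2s+4-n+s-2=q-s\ge0$, so $\delta(H)\ge n'/2$. Dirac's theorem then gives Hamiltonicity of $H$. To force the cycle to pass through $u$ and $v$ in a way that splits $V(H)$ into two $(u,v)$-paths, note that every Hamiltonian cycle passes through both vertices. The cycle then splits into two $(u,v)$-paths automatically, each nontrivial whenever $u$ and $v$ are not consecutive on the cycle. If they are consecutive, one of the paths is just the edge $uv$, which is still a legitimate path. So the case $s\ge 2$ closes with Dirac applied to $H$, and no Ore-type edge condition is needed there.

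The main obstacle is the base case $s=1$ together with the requirement $\delta\le n-2$. Hamiltonian-connectedness from $\delta\ge (n+1)/2$ must be cited (Ore) or proved via a longest-path rotation argument. The upper bound $\delta\le n-2$ only ensures that $q\le n-2$, which keeps the count of common neighbours meaningful; it plays no deeper role. I would also double-check the parity and rounding in $|N(u)\cap N(v)|\ge 2\delta-n$ when $u$ and $v$ are adjacent, where the count is in fact $\ge 2\delta-n$ after excluding $u$ and $v$ themselves. That gives exactly $q-2\ge s-2$ common neighbours, so the choice of $W$ always succeeds.
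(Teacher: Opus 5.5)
The paper gives no proof of this lemma; it is quoted from Lin, Huang and Hsu, so there is nothing internal to compare against. Your argument is a correct self-contained proof. It is also the device the paper itself uses in Claim~\ref{clm:new-system}: reserve common neighbours of $u$ and $v$ as paths $uw_iv$, then split a Hamiltonian cycle of the remainder into its two $u$--$v$ arcs. The paper needs P\'osa's criterion there only because deleting $E(B)$ spoils the uniform degree bound; here Dirac suffices. The following parts all check out:
\begin{itemize}
\item the common-neighbour count, which is at least $q$ if $uv\notin E(G)$ and at least $q-2$ if $uv\in E(G)$, hence at least $s-2$ in both cases;
\item the estimate $2\delta(H)-n'\ge q-s\ge 0$;
\item the Ore-type base case for $s=1$.
\end{itemize}

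There is one omission, and it is exactly where your remark about $\delta\le n-2$ goes wrong. Dirac's theorem needs $n'\ge 3$, which you never verify. It holds because
\[
n'=n-s+2\ge n-q+2=2(n-\delta)\ge 4,
\]
and the last inequality is precisely the hypothesis $\delta\le n-2$. So that hypothesis is not incidental and does not merely keep the count meaningful; it is what makes the bound true. For $G=K_n$ one has $q=n$ but $\kappa^{*}(K_n)=n-1$. Your construction with $s=n$ would leave $H$ equal to the single edge $uv$, which has no Hamiltonian cycle. The bound $n'\ge 3$ is also what guarantees that the two arcs of the cycle are distinct paths, so at most one of them is the edge $uv$.

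Two minor remarks. No induction on $s$ is actually used; every $s\ge 2$ is handled directly by the same construction. The base case can also be read off from Lemma~\ref{lem:HC}: since $\delta\ge n/2+1$, we have $\psi_j(G)=0$ for all $j\le\lfloor n/2\rfloor$.
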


A graph is \emph{Hamiltonian-connected} if every pair of distinct vertices is
joined by a Hamiltonian path.  We use the following degree-sequence
criterion.

\begin{lemma}[Chartrand, Kapoor and Kronk~\cite{ChartrandKapoorKronk}]
\label{lem:HC}
Let $G$ be a graph of order $n\ge4$.  If $\psi_j(G)<j-1$ for every
integer $j$ with $2\le j\le\floor{n/2}$, then $G$ is
Hamiltonian-connected.
\end{lemma}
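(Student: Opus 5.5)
The statement just above is Lemma~\ref{lem:HC}, the Chartrand--Kapoor--Kronk degree-sequence criterion for Hamiltonian-connectedness. It is a classical result, so I would prove it with the standard closure argument rather than anything specific to this paper.

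The plan rests on a Bondy--Chvátal type closure lemma for Hamiltonian-connectedness: if $x,y$ are nonadjacent and $\dd_G(x)+\dd_G(y)\ge n+1$, then $G$ is Hamiltonian-connected iff $G+xy$ is. I would prove this first, as follows.
\begin{enumerate}[label=(\roman*)]
\item Take a Hamiltonian $(a,b)$-path in $G+xy$ that uses $xy$; deleting $xy$ splits it into two subpaths.
\item If $xy$ is not an end edge, the path is $a\cdots x\,y\cdots b$. The bound $\dd_G(x)+\dd_G(y)\ge n+1$ forces a crossing pair of edges; the count is one higher than for Ore's cycle lemma because the endpoints $a,b$ are fixed.
\item These crossing edges let us reroute into a Hamiltonian $(a,b)$-path of $G$. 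The degenerate cases, where $x$ or $y$ equals $a$ or $b$, are handled by the same pigeonhole count.
\end{enumerate}
By this lemma, $G$ is Hamiltonian-connected iff its $(n+1)$-closure $\mathrm{cl}(G)$ is. It then suffices to show that $\mathrm{cl}(G)$ is complete.

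The hypothesis is monotone under adding edges. So suppose $H=\mathrm{cl}(G)$ satisfies $\psi_j(H)<j-1$ for $2\le j\le\floor{n/2}$ but is not complete. Among nonadjacent pairs of $H$, choose $x,y$ maximizing $\dd_H(x)+\dd_H(y)$, with $\dd_H(x)\le\dd_H(y)$, and set $j=\dd_H(x)$. Since $H$ is closed, $\dd_H(x)+\dd_H(y)\le n$, so $j\le\floor{n/2}$. Also $j\ge2$: taking $j=2$ gives $\psi_2<1$, so every degree is at least $3$. Counting then proceeds in two steps.
\begin{enumerate}[label=(\roman*)]
\item Every vertex $w\neq y$ nonadjacent to $y$ satisfies $\dd_H(w)\le n-\dd_H(y)\cdot$(by maximality) $\le\dots$, giving roughly $j+1$ vertices of degree $\le j+1$. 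Concretely, there are $n-1-\dd_H(y)\ge j-1$ such vertices, each of degree at most $j$ by maximality, plus $x$ itself.
\item This gives $\psi_j(H)\ge j$ or $\psi_{j}(H)\ge j-1$, contradicting $\psi_j(H)<j-1$.
\end{enumerate}

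The main obstacle is getting the indices exactly right. Both the strict inequality $<j-1$ and the threshold $n+1$ in the closure lemma must produce a contradiction, which requires carefully counting the nonneighbours of $y$ together with $x$. I would also verify the $n\ge4$ boundary cases directly.
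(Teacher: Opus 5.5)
Your proof is correct. The paper gives no proof of this lemma; it is only cited from Chartrand, Kapoor and Kronk, so your argument is a self-contained route the paper does not take. You reduce to the Bondy--Chv\'atal fact that Hamiltonian-connectedness is $(n+1)$-stable, and then show that the hypothesis forces the $(n+1)$-closure to be $K_n$. The citation keeps the paper short. Your route gives a complete elementary proof, and its real content is the stability lemma, which you only sketch. Two places should be tightened.

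\textbf{The counting step.} The vertex $x$ is itself a non-neighbour of $y$, so ``plus $x$ itself'' double-counts. The hedge ``$\psi_j(H)\ge j$ or $\psi_j(H)\ge j-1$'' should also go. The clean statement is as follows. The $n-1-d_H(y)\ge n-1-(n-j)=j-1$ vertices outside $N_H(y)\cup\{y\}$ all have degree at most $d_H(x)=j$, by maximality of $d_H(x)+d_H(y)$. Hence $\psi_j(H)\ge j-1$. This contradicts the hypothesis at index $j$, which is admissible because $2\le j\le\lfloor n/2\rfloor$: first, $\psi_2(H)\le\psi_2(G)=0$ gives $\delta(H)\ge 3$, and this is exactly where $n\ge4$ enters; second, $2j\le d_H(x)+d_H(y)\le n$. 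No separate check of small $n$ is needed.

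\textbf{The stability lemma.} Write the Hamiltonian $(a,b)$-path of $G+xy$ as $v_1\cdots v_n$ with $v_1=a$, $v_n=b$, $x=v_i$ and $y=v_{i+1}$. Put $S=\{k: xv_k\in E(G)\}$ and $T=\{k: yv_{k+1}\in E(G)\}$. Since $xy\notin E(G)$, we have $S\subseteq\{1,\dots,n\}\setminus\{i,i+1\}$ and $T\subseteq\{0,\dots,n-1\}\setminus\{i-1,i\}$. Both sets lie in the $n$-element set $\{0,\dots,n\}\setminus\{i\}$. So $|S|+|T|=d_G(x)+d_G(y)\ge n+1$ gives some $k\in S\cap T$, and necessarily $1\le k\le i-2$ or $i+2\le k\le n-1$. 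In the first case $v_1\cdots v_k v_i v_{i-1}\cdots v_{k+1} v_{i+1}\cdots v_n$ is a Hamiltonian $(a,b)$-path of $G$. In the second case $v_1\cdots v_i v_k v_{k-1}\cdots v_{i+1} v_{k+1}\cdots v_n$ is one. This handles the end-edge cases $i=1$ and $i=n-1$ automatically, so your separate ``degenerate cases'' need no extra argument.
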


The following classical criterion is the key to reserving a sparse
subgraph.

\begin{lemma}[P\'osa~\cite{Posa}]\label{lem:Posa}
Let $G$ be a graph of order $n\ge3$.  If $\psi_j(G)<j$ for every integer
$j$ with $1\le j<n/2$, then $G$ is Hamiltonian.
\end{lemma}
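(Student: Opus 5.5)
We argue by contradiction using Bondy–Chvátal edge-saturation together with the standard path-rotation counting argument. Suppose $G$ satisfies $\psi_j(G)<j$ for every $1\le j<n/2$ but is not Hamiltonian. Add edges to $G$ one at a time as long as the graph stays non-Hamiltonian, and call the result $G'$. Then $G'$ is edge-maximal non-Hamiltonian on the same vertex set. Adding edges can only increase degrees, so $\psi_j(G')\le\psi_j(G)<j$ for every $1\le j<n/2$. Since $n\ge3$, the complete graph $K_n$ is Hamiltonian, so $G'\ne K_n$ and $G'$ has a pair of nonadjacent vertices. From now on we work only in $G'$ and derive a contradiction.

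Among all nonadjacent pairs in $G'$, choose $u,v$ with $\dd_{G'}(u)+\dd_{G'}(v)$ as large as possible, labelled so that $\dd_{G'}(u)\le\dd_{G'}(v)$, and put $j:=\dd_{G'}(u)$. By maximality of $G'$, the graph $G'+uv$ is Hamiltonian, and every Hamiltonian cycle in it uses $uv$. Hence $G'$ has a Hamiltonian path $u=x_1,x_2,\dots,x_n=v$. If $x_i\in\N_{G'}(u)$ and $x_{i-1}\in\N_{G'}(v)$ for some $i$, then $x_1x_i x_{i+1}\cdots x_n x_{i-1}x_{i-2}\cdots x_1$ is a Hamiltonian cycle of $G'$, which is impossible.

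Now let $S:=\{x_{i-1}: x_i\in\N_{G'}(u)\}$. We record three facts about $S$.
\begin{enumerate}[label=(\roman*)]
\item $|S|=j$, and no vertex of $S$ is adjacent to $v$. The first holds because $i\mapsto i-1$ is injective; the second is the previous paragraph. Also $v\notin S$, since $x_n$ is never a predecessor.
\item $\N_{G'}(v)\subseteq\V(G')\setminus(S\cup\{v\})$, so $\dd_{G'}(v)\le n-1-j$. Together with $\dd_{G'}(u)\le\dd_{G'}(v)$ this gives $2j\le n-1$, so $j<n/2$.
\item Every $x\in S$ satisfies $\dd_{G'}(x)\le j$. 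Indeed, $x$ and $v$ are nonadjacent, so by the choice of $(u,v)$ we have $\dd_{G'}(x)+\dd_{G'}(v)\le\dd_{G'}(u)+\dd_{G'}(v)$. This also covers $x=x_1=u$, which lies in $S$ exactly when $x_2\in\N_{G'}(u)$.
\end{enumerate}

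It follows that $\psi_j(G')\ge|S|=j$. It remains to check that $j\ge1$, so that $j$ lies in the range $1\le j<n/2$. The hypothesis with $j=1$ (valid since $1<n/2$ for $n\ge3$) gives $\psi_1(G')<1$, so $\delta(G')\ge2$ and hence $j\ge2$. Thus $\psi_j(G')\ge j$ for some $1\le j<n/2$, contradicting $\psi_j(G')<j$.

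The main obstacle is step (iii): the extremal choice of $(u,v)$ is exactly what converts the nonadjacency of each predecessor $x_{i-1}$ to $v$ into the degree bound $\dd_{G'}(x_{i-1})\le j$. The rest is routine bookkeeping: checking that $G'$ inherits the degree-sequence hypothesis, the boundary cases $x_{i-1}=u$ and $v\notin S$, and the inequality $j<n/2$.
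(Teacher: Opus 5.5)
The paper gives no proof of this lemma. It is quoted from P\'osa and used as a black box, so there is no in-paper argument to compare with. Your proof is correct and complete, and it is the standard textbook argument, the same one used for Chv\'atal's degree-sequence theorem:
\begin{enumerate}[label=(\arabic*)]
\item pass to an edge-maximal non-Hamiltonian supergraph $G'$, which inherits the hypothesis because $\psi_j$ can only decrease when edges are added;
\item take a nonadjacent pair $u,v$ of maximum degree sum;
\item use the Hamiltonian $(u,v)$-path and the crossing-edge rotation to show that the $d_{G'}(u)$ predecessors of neighbours of $u$ are nonadjacent to $v$, so by extremality they have degree at most $d_{G'}(u)$.
\end{enumerate}
The boundary details all hold. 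Since $N_{G'}(u)\subseteq\{x_2,\dots,x_{n-1}\}$, the index shift is injective and $v\notin S$. A crossing pair automatically has $3\le i\le n-1$, because $u\notin N_{G'}(v)$, so the rotated cycle is well defined. Finally, $j\ge 2$ follows from the case $j=1$ of the hypothesis, which is in range because $n\ge 3$.

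One comparison is worth making. For odd $n$, the version stated in the paper has a slightly stronger hypothesis than P\'osa's original theorem, which only asks that $\psi_{(n-1)/2}(G)\le (n-1)/2$. That is why your proof closes as soon as you reach $2j\le n-1$. For the original form one extra line is needed. If $2j=n-1$, then $j\le d_{G'}(v)\le n-1-j=j$. So $v$ is a further vertex of degree at most $j$ outside $S$, which gives $\psi_j(G')\ge j+1>(n-1)/2$.
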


We also use the set form of Menger's theorem: in a $k$-connected graph, for
any two disjoint vertex sets $A$ and $B$ with $|A|,|B|\ge k$, there are $k$
pairwise vertex-disjoint $A$--$B$ paths; see~\cite{Diestel}.

The last preliminary lemma replaces the uniform minimum-degree step in the
original proof.  It records that only a small number of vertices can lose two
edges when the reserved subgraph is deleted.

\begin{lemma}\label{lem:reserve}
Let $k\ge2$ and $t\ge3$.  Let $G$ be a graph of order $n$ satisfying
\eqref{eq:degree-main} and \eqref{eq:order-main}.  Let $B$ be a subgraph
of $G$ such that
\[
 \Delta(B)\le2,
 \qquad Z:=\{x\in\V(G):\dd_B(x)=2\},
 \qquad |Z|\le k+1.
\]
Set $H:=G-\E(B)$.  Then the following statements hold.
\begin{enumerate}[label=\textup{(\roman*)},leftmargin=2.8em]
\item\label{reserve:i}
If $R\subseteq\V(G)$ and $|R|\le t-2$, then $H-R$ is Hamiltonian.

\item\label{reserve:ii}
If $u,v\notin R$, $uv\in\E(H)$, $|R|\le t-3$, and
$\dd_B(u),\dd_B(v)\in\{0,2\}$, then $(H-R)-uv$ is Hamiltonian.

\item\label{reserve:iii}
Suppose that $u,v\notin R$, $uv\in\E(H)$,
\[
 2\delta(G)-n=t,\qquad \dd_B(u)=\dd_B(v)=2,\qquad |R|=t-2.
\]
Then $(H-R)-uv$ is Hamiltonian.
\end{enumerate}
\end{lemma}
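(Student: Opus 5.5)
The plan is to prove all three parts in the same way: bound every degree in $H':=H-R$ (or $H-R-uv$) from below in terms of $m:=|\V(H')|=n-|R|$, and then check P\'osa's condition (Lemma~\ref{lem:Posa}). The picture to aim for is that almost every vertex of $H'$ has degree at least $m/2$. Only vertices of $Z$ fall short, by about $1$, and in parts (ii) and (iii) the endpoints $u,v$ fall short by a little more. Since $|Z|\le k+1$ and the order bound gives $m/2\gtrsim k+2$, the few deficient vertices should never violate $\psi_j(H')<j$.

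\textbf{Degree bound.} Write $r:=|R|$. Deleting $R$ costs each remaining vertex at most $r$ neighbours. Deleting $\E(B)$ costs at most $\dd_B(x)\le 2$, and exactly $2$ only for $x\in Z$. Deleting $uv$ costs $1$ at $u$ and at $v$. Hence, using \eqref{eq:degree-main},
\[
 \dd_{H'}(x)\ \ge\ \ceil{\tfrac{n+t}{2}}-r-\dd_B(x)-[x\in\{u,v\}].
\]

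\textbf{Part (i).} Here $r\le t-2$. For $x\notin Z$ we get $\dd_{H'}(x)\ge \frac{n+t}{2}-r-1\ge \frac{n-r}{2}=\frac m2$, because $t-r\ge 2$. For $x\in Z$ we get $\dd_{H'}(x)\ge \frac m2-1$.

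Fix an integer $j<m/2$. If $j<\frac m2-1$, no vertex has degree at most $j$, so $\psi_j(H')=0<j$. Otherwise $j\ge\frac m2-1$, and since $j$ is an integer, $j\ge\ceil{\frac m2}-1$. In this case only $Z$-vertices can be counted, so $\psi_j(H')\le k+1$. We also have $j\ge \frac{n-t+2}{2}-1=\frac{n-t}{2}$, and $n\ge 2k+t+3$ gives $\frac{n-t}{2}\ge k+\frac32>k+1$. So $\psi_j(H')<j$, and P\'osa applies; note $m\ge n-t+2\ge 3$.

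\textbf{Part (ii).} Now $r\le t-3$, which gives one extra unit of slack. Non-$Z$ vertices other than $u,v$ have $\dd_{H'}\ge \frac m2+\frac12$, and $Z$-vertices other than $u,v$ have $\dd_{H'}\ge \frac m2-\frac12$. For the endpoints, the hypothesis $\dd_B(u),\dd_B(v)\in\{0,2\}$ matters. If $\dd_B(u)=0$, the total loss at $u$ is at most $1+r$. If $\dd_B(u)=2$, then $\dd_{H'}(u)\ge \frac m2-\frac32$. So at most two vertices lie below $\frac m2-\frac12$.

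For the relevant $j$ (at least about $\frac m2-\frac32$) we get $\psi_j\le 2<j$. In the window $\frac m2-\frac12\le j<\frac m2$ we get $\psi_j\le k+1<j$, again by $n\ge 2k+t+3$.

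\textbf{Part (iii).} Here $\delta(G)=\frac{n+t}{2}$ exactly, $r=t-2$, and so $m=n-t+2$ is even. Then:
\begin{itemize}
\item the endpoints have $\dd_{H'}(u),\dd_{H'}(v)\ge \frac m2-2$;
\item the other $Z$-vertices have degree at least $\frac m2-1$;
\item all remaining vertices have degree at least $\frac m2$.
\end{itemize}
The only $j<m/2$ to check are $j=\frac m2-2$ and $j=\frac m2-1$. For the first, $\psi_j\le 2<j$, since $\frac m2-2=\frac{n-t}{2}-1\ge k+\frac12>2$. For the second, $\psi_j\le k+1<\frac{n-t}{2}=j$.

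\textbf{Main obstacle.} The delicate step is the integer bookkeeping with ceilings, especially in part (ii), where $n+t$ may be odd. There the half-unit gains have to be converted carefully into integer thresholds, keeping track of which $j$ are actually below $m/2$. I would handle this by splitting on the parity of $n+t$ and checking each case. I expect only the bound $n\ge 2k+t+3$ from \eqref{eq:order-main} to be needed here, together with $|Z|\le k+1$.
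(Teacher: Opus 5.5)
Your proposal is correct and follows essentially the paper's route: bound each degree in $H-R$ (or $(H-R)-uv$) below by $\delta(G)-|R|-\dd_B(x)$, note that only vertices of $Z$, and in (ii)/(iii) the endpoints $u,v$, fall below roughly $m/2$, and check P\'osa's condition using $|Z|\le k+1$ and $n\ge 2k+t+3$. The paper splits on the parity of $N=n-|R|$ and works with $q=\floor{N/2}$, whereas your real thresholds $m/2-c$ already settle every integer $j<m/2$, so the parity case split you anticipated as the ``main obstacle'' is not actually needed.
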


\begin{proof}
Put $N:=n-|R|$ and $q:=\floor{N/2}$.  Since $|R|\le t-2$ and
$n\ge2k+t+3$, we have $N=n-|R|\ge n-t+2\ge2k+5$.
Thus
\begin{equation}\label{eq:qbound}
 \begin{cases}
  q\ge k+3, & N\text{ even},\\
  q\ge k+2, & N\text{ odd}.
 \end{cases}
\end{equation}

For~\ref{reserve:i}, every $x\in\V(H-R)$ satisfies
\begin{equation}\label{eq:reserve-gap}
 \dd_{H-R}(x)\ge \delta(G)-|R|-\dd_B(x),
 \qquad
 2(\delta(G)-|R|)-N=2\delta(G)-n-|R|\ge t-|R|\ge2.
\end{equation}
If $N=2q$, then $\delta(G)-|R|\ge q+1$.  Every vertex of $Z$ has degree at
least $q-1$ in $H-R$, and every vertex outside $Z$ has degree at least
$q$.  Hence
\[
 \psi_j(H-R)=0\quad(1\le j\le q-2),
 \qquad
 \psi_{q-1}(H-R)\le k+1<q-1.
\]
If $N=2q+1$, then $2(\delta(G)-|R|)-N$ is odd and, by
\eqref{eq:reserve-gap}, at least two.  Hence
$2(\delta(G)-|R|)-N\ge3$, so $\delta(G)-|R|\ge q+2$.  Thus
\[
 \psi_j(H-R)=0\quad(1\le j\le q-1),
 \qquad
 \psi_q(H-R)\le k+1<q.
\]
In both cases Lemma~\ref{lem:Posa} applies.

For~\ref{reserve:ii}, put $F=(H-R)-uv$.  Now
$2(\delta(G)-|R|)-N\ge t-|R|\ge3$.  If $N=2q$, the left-hand side is
even and therefore at least four, so $\delta(G)-|R|\ge q+2$; only $u$
and $v$ can have degree at most $q-1$ in $F$.  Hence
\[
 \psi_j(F)=0\quad(1\le j\le q-2),
 \qquad \psi_{q-1}(F)\le2<q-1.
\]
If $N=2q+1$, the only vertices of degree at most $q-1$ are $u,v$,
and every vertex of degree at most $q$ belongs to $Z$.  Therefore
\[
 \psi_j(F)=0\quad(1\le j\le q-2),
 \quad \psi_{q-1}(F)\le2<q-1,
 \quad \psi_q(F)\le k+1<q.
\]
Again Lemma~\ref{lem:Posa} applies.

For~\ref{reserve:iii}, since $|R|=t-2$
and $2\delta(G)-n=t$,
we have $2(\delta(G)-|R|)-N=2$, so $N=2q$ and
$\delta(G)-|R|=q+1$.  In
$F=(H-R)-uv$, each vertex of $Z\setminus\{u,v\}$ has degree at least
$q-1$, whereas $u,v$ have degree at least $q-2$.  Hence
\[
 \psi_j(F)=0\quad(1\le j\le q-3),
 \quad \psi_{q-2}(F)\le2<q-2,
 \quad \psi_{q-1}(F)\le k+1<q-1,
\]
where the strict inequalities follow from $q\ge k+3$.  A final
application of Lemma~\ref{lem:Posa} completes the proof.
\end{proof}

\section{Proof of Theorem~\ref{thm:main}}

\begin{proof}
Fix distinct vertices $u,v\in\V(G)$ and an integer $s$ with
$1\le s\le t$.

\medskip
\noindent
\textbf{Case 1: $\delta(G)=n-1$.}

Then $G=K_n$.  Since $n\ge2k+t+3$, we have
$n-2\ge s$.  Partition $\V(G)\setminus\{u,v\}$ into $s$ nonempty sets
$X_1,\dots,X_s$.  For each $i$, choose a
$(u,v)$-path $P_i$ whose internal vertex set is exactly $X_i$.
Then $P_1,\dots,P_s$ form a spanning system of $s$ internally
vertex-disjoint $(u,v)$-paths.  Put $P:=P_1\cup\cdots\cup P_s$.

Let $F:=G-\E(P)$.  We show directly that $F$ is $k$-connected.  Let
$X\subseteq\V(G)$ with $|X|\le k-1$.  Every vertex
$x\notin\{u,v\}$ has $\dd_F(x)=n-3$.
Suppose that $F-X$ has two distinct components $C_1$ and $C_2$, each
containing a vertex outside $\{u,v\}$.  For $i\in\{1,2\}$, choose
$x_i\in \V(C_i)\setminus\{u,v\}$.  Since $x_i$ has no neighbor in
$F-X$ outside $C_i$, we have
\[
 |\V(C_i)|
 \ge \dd_{F-X}(x_i)+1
 \ge \dd_F(x_i)-|X|+1
 =n-|X|-2.
\]
Consequently,
\[
 n-|X|=|\V(F-X)|
 \ge |\V(C_1)|+|\V(C_2)|
 \ge 2(n-|X|-2).
\]
This implies $n-|X|\le4$.  On the other hand,
$n-|X|\ge2k+t+3-(k-1)=k+t+4>4$, a contradiction.  Hence all vertices
of $\V(G)\setminus(X\cup\{u,v\})$ belong to one component $C$ of $F-X$.

It remains to show that every vertex of $\{u,v\}\setminus X$ also
belongs to $C$.  Let $z\in\{u,v\}\setminus X$.  We have
\[
 \dd_{F-X}(z)
 \ge n-1-s-|X|
 \ge n-t-k
 \ge k+3.
\]
If $z\notin\V(C)$, then the component of $F-X$ containing $z$ has at
most two vertices, both belonging to $\{u,v\}$.  Hence
$\dd_{F-X}(z)\le1$, contrary to $\dd_{F-X}(z)\ge k+3$.  Thus
$z\in\V(C)$.  Therefore $C$
contains every vertex of $F-X$, and so $F-X$ is connected.  Since $X$
was arbitrary, $F$ is $k$-connected.

\medskip
\noindent
\textbf{Case 2: $\delta(G)\le n-2$.}

By \eqref{eq:degree-main}, $\delta(G)\ge n/2+1$.
Lemma~\ref{lem:spanning} yields
\[
 \ks(G)\ge2\delta(G)-n+2\ge t+2.
\]
Hence there is a spanning system
$\mathcal P_0=\{P_1^0,\dots,P_s^0\}$ of $s$ internally vertex-disjoint
$(u,v)$-paths.  Put $P_0:=P_1^0\cup\cdots\cup P_s^0$.
If $G-\E(P_0)$ is $k$-connected, take $P:=P_0$.  Assume that
$G-\E(P_0)$ is not $k$-connected, and set $G':=G-\E(P_0)$ and
$k':=\kk(G')<k$.  Choose $W\subseteq\V(G')$ with $|W|=k'$ such that
$G'-W$ is disconnected.

\begin{claim}\label{clm:components}
The graph $G'-W$ has exactly two components, say $G_1$ and $G_2$.
Moreover, for $i=1,2$,
\begin{equation}\label{eq:component-bounds}
 \delta(G)-k'-1\le|\V(G_i)|\le n-\delta(G)+1.
\end{equation}
\end{claim}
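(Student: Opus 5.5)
The plan is to exploit the fact that $G'=G-\E(P_0)$ still has very large minimum degree, and then count vertices. Because the paths $P_1^0,\dots,P_s^0$ are internally vertex-disjoint, every internal vertex has degree exactly $2$ in $P_0$, while $u$ and $v$ have degree $s\le t$. Hence $\dd_{G'}(x)\ge\delta(G)-2$ for every $x\notin\{u,v\}$, and $\dd_{G'}(x)\ge\delta(G)-s$ for $x\in\{u,v\}$.

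\textbf{Lower bound.} Let $C$ be any component of $G'-W$. I will pick $x\in\V(C)$ that avoids $u$ and $v$, and bound $|\V(C)|$ through the $G'$-neighbors of $x$.
\begin{itemize}[leftmargin=2.5em]
\item Those neighbors lie in $C\cup W$, so $|\V(C)|\ge\dd_{G'}(x)-k'+1\ge\delta(G)-k'-1$.
\item The constant $-1$ in the claim matches this $-2$ loss.
\item The only difficulty is a component lying entirely inside $\{u,v\}$. Such a $C$ has at most two vertices, so any $z\in\V(C)$ has $\dd_{G'}(z)\le 1+k'\le k$.
\item But $\dd_{G'}(z)\ge\delta(G)-t\ge (n+t)/2-t=(n-t)/2$. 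Using $n\ge2k+t+3$ gives $(n-t)/2\ge k+3/2>k$, a contradiction.
\end{itemize}
So every component contains a vertex outside $\{u,v\}$ and satisfies the lower bound.

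\textbf{Exactly two components.} If there were three or more components, summing the lower bound would give
\[ n\ge k'+3(\delta(G)-k'-1). \]
Since $\delta(G)\ge(n+t)/2$, this becomes $n\ge 3(n+t)/2-2k'-3$, i.e.\ $n\le 4k'+6-3t\le4k+2-3t$. This contradicts $n\ge 6k+9-3t$, so there are exactly two components $G_1$ and $G_2$.

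\textbf{Upper bound.} We have
\[ |\V(G_i)|=n-k'-|\V(G_{3-i})|\le n-k'-(\delta(G)-k'-1)=n-\delta(G)+1. \]

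\textbf{Main obstacle.} The only step needing care is excluding a tiny component inside $\{u,v\}$, since $u$ and $v$ lose up to $s$ edges rather than $2$. The order bound $n\ge2k+t+3$ together with \eqref{eq:degree-main} handles it.
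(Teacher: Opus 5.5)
Your proof is correct and follows the paper's argument: the degree bound in $G'=G-E(P_0)$ gives $|V(C)|\ge\delta(G)-k'-1$ for every component containing a vertex outside $\{u,v\}$; counting excludes three or more components; and the upper bound comes from the complementary component. The only differences are cosmetic. You rule out a component inside $\{u,v\}$ directly via $d_{G'}(z)\ge(n-t)/2>k$, whereas the paper shows every component has at least $\delta(G)-t-k'+1\ge4$ vertices. You also dispatch $r\ge3$ by substituting $\delta(G)\ge(n+t)/2$ directly, rather than through the parity-based estimate $3\delta(G)\ge n+3k+5$.
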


\begin{proof}[Proof of Claim~\ref{clm:components}]
Let $G_1,\dots,G_r$ be the components of $G'-W$.  Since
$\Delta(P_0)\le t$, every vertex $x\in\V(G_i)$ satisfies
\[
 \dd_{G'}(x)\ge \delta(G)-t.
\]
Therefore
\[
 |\V(G_i)|\ge \delta(G)-t-k'+1.
\]
We first verify that the right-hand side is at least four.  If
$3\le t\le k+1$, then
\[
 \delta(G)\ge\ceil{(6k+9-3t+t)/2}=3k+5-t\ge k+t+2,
\]
whereas if $t\ge k+2$, then
\[
 \delta(G)\ge\ceil{(2k+t+3+t)/2}=k+t+2.
\]
Since $k'\le k-1$, it follows that
\[
 \delta(G)-t-k'+1\ge4.
\]
Thus every $G_i$ contains a vertex
$x_i\notin\{u,v\}$.
Such a vertex is internal to exactly one path in $\mathcal P_0$, so
$\dd_{P_0}(x_i)=2$.
Consequently
\begin{equation}\label{eq:component-lower}
 |\V(G_i)|\ge \delta(G)-k'-1.
\end{equation}
If $r\ge3$, then
\[
 n-k'\ge3(\delta(G)-k'-1).
\]
Hence
\[
 3\delta(G)\le n+2k'+3\le n+2k+1.
\]

On the other hand, we have
\[
 3\delta(G)\ge n+3k+5.
\]
Indeed, if $n+t$ is odd, then
\[
 3\delta(G)\ge\frac{3(n+t+1)}2\ge n+3k+5,
\]
by $n\ge6k+9-3t$.  If $n+t$ is even, equality
$n=6k+9-3t$ is impossible because $6k+9-2t$ is odd; hence
$n\ge6k+10-3t$, and
\[
 3\delta(G)\ge\frac{3(n+t)}2\ge n+3k+5.
\]
This contradiction shows that $r=2$.
Applying
\eqref{eq:component-lower} to the component opposite
$G_i$ gives
\[
 |\V(G_i)|=n-k'-|\V(G_{3-i})|
       \le n-k'-(\delta(G)-k'-1)=n-\delta(G)+1.
\]
This proves the claim.
\end{proof}

\begin{claim}\label{clm:reserve-paths}
There are $k$ pairwise vertex-disjoint paths $Q_1,\dots,Q_k$ joining
$G_1$ to $G_2$ such that their ends avoid $u,v$.  If
\[
 B:=Q_1\cup\cdots\cup Q_k,
 \qquad Z:=\{x\in\V(G):\dd_B(x)=2\},
\]
then
\begin{equation}\label{eq:B-properties}
 \Delta(B)\le2,
 \qquad |Z|\le k+1,
 \qquad \dd_B(u),\dd_B(v)\in\{0,2\}.
\end{equation}
\end{claim}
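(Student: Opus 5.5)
We will apply the set form of Menger's theorem in $G$ itself, which is $k$-connected. Put $A:=\V(G_1)\setminus\{u,v\}$ and $A':=\V(G_2)\setminus\{u,v\}$. By \eqref{eq:component-bounds} and the bound $\delta(G)-k'-1\ge k+t+2-(k-1)-1\ge t+2$ obtained in the proof of Claim~\ref{clm:components}, we get $|\V(G_i)|\ge k+2$. Indeed, $\delta(G)-k'-1\ge \delta(G)-k\ge k+2$ by the inequality $\delta(G)\ge k+t+2$ established there. Hence $|A|,|A'|\ge k$, and Menger gives $k$ pairwise vertex-disjoint $A$--$A'$ paths $Q_1,\dots,Q_k$ in $G$. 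By the usual convention, each $Q_i$ meets $A$ only in its first vertex and $A'$ only in its last vertex. In particular, the ends avoid $u,v$, and every inner vertex lies in $W\cup\{u,v\}$ or in $(\V(G_1)\cup\V(G_2))\cap\{u,v\}$.

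Because the paths are vertex-disjoint, $B$ is a linear forest, so $\Delta(B)\le2$ automatically. Each vertex of $B$ has $\dd_B=1$ if it is an end of some $Q_i$, and $\dd_B=2$ if it is an inner vertex. An end cannot be $u$ or $v$, so $\dd_B(u),\dd_B(v)\in\{0,2\}$ at once. For $|Z|$, the inner vertices all lie in $W\cup\{u,v\}$, since any inner vertex outside $W\cup\{u,v\}$ would lie in $\V(G_1)\cup\V(G_2)$ minus $\{u,v\}$, that is, in $A\cup A'$, which is excluded. Therefore $|Z|\le |W|+2=k'+2\le k+1$, using $k'\le k-1$.

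The only points requiring care are the following two. First, Menger's set form must be applied with paths minimal, i.e.\ touching $A\cup A'$ only at their ends; we can always shortcut to such paths by taking, along each path, the last vertex in $A$ and the first subsequent vertex in $A'$. Second, we must verify $|A|,|A'|\ge k$, which is the main (though mild) obstacle. I expect this to follow from the lower bound $|\V(G_i)|\ge\delta(G)-k'-1$ together with $\delta(G)\ge k+t+2$ and $t\ge3$, giving $|\V(G_i)|-2\ge k+t-1>k$.

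Note also that the paths may use edges of $P_0$; this is harmless, since Claim~\ref{clm:reserve-paths} only asserts properties of $B$ as a subgraph of $G$.
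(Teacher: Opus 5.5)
Your construction is the same as the paper's: apply Menger's set form in $G$ to $A_i=V(G_i)\setminus\{u,v\}$, shortcut to minimal subpaths, and note that the inner vertices are then forced into $W\cup\{u,v\}$. Your deductions of $\Delta(B)\le 2$, $|Z|\le k'+2\le k+1$ and $d_B(u),d_B(v)\in\{0,2\}$ are correct. The gap is in the step you flagged yourself, namely the verification that $|A_1|,|A_2|\ge k$. Without it Menger's set form cannot be invoked, and the only degree bound you use, $\delta(G)\ge k+t+2$, is too weak:
\begin{itemize}
\item From $|V(G_i)|\ge\delta(G)-k'-1$ it gives $|V(G_i)|\ge k+t+1-k'$. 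When $k'=k-1$ this is only $t+2$.
\item Your chain ``$\delta(G)-k\ge k+2$'' needs $\delta(G)\ge 2k+2$. That follows from $\delta(G)\ge k+t+2$ only when $t\ge k$.
\item Your closing estimate $|V(G_i)|-2\ge k+t-1$ drops the $-k'$ term.
\end{itemize}
For example, with $t=3$, $k=10$, $k'=9$, the bound $\delta(G)\ge k+t+2$ alone gives only $|A_i|\ge 3<k$.

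What is missing is the stronger degree bound that the order hypothesis gives when $3\le t\le k+1$. There $n\ge 6k+9-3t$ yields
$\delta(G)\ge\lceil(6k+9-2t)/2\rceil=3k+5-t\ge 2k+4$.
When $t\ge k+2$ one has $\delta(G)\ge k+t+2\ge 2k+4$ directly. So $\delta(G)\ge 2k+4$ in both ranges, and then
$|A_i|\ge\delta(G)-k'-3\ge\delta(G)-k-2\ge k+2$.
With this in place, the rest of your argument goes through unchanged. The point is not cosmetic: this is exactly where the hypothesis $n\ge 6k+9-3t$ enters the claim. For small $t$ and large $k$ it cannot be replaced by the weaker consequence $\delta(G)\ge k+t+2$ that you quoted from the proof of Claim~\ref{clm:components}.
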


\begin{proof}[Proof of Claim~\ref{clm:reserve-paths}]
For $i=1,2$, set $A_i:=\V(G_i)\setminus\{u,v\}$.
By
\eqref{eq:component-bounds},
\[
 |A_i|\ge \delta(G)-k'-3\ge \delta(G)-k-2.
\]
If $3\le t\le k+1$, then
\[
 \delta(G)\ge3k+5-t\ge2k+4,
\]
while if $t\ge k+2$, then
\[
 \delta(G)\ge k+t+2\ge2k+4.
\]
Consequently $|A_i|\ge k+2$.
Since $G$ is $k$-connected, Menger's theorem gives $k$ pairwise
vertex-disjoint $A_1$--$A_2$ paths.  From each path take a minimal
subpath whose first vertex lies in $A_1$ and whose last vertex lies in
$A_2$.  Denote the resulting paths by $Q_1,\dots,Q_k$.

Their internal vertices avoid $A_1\cup A_2$.  Since
$\V(G)\setminus(A_1\cup A_2)\subseteq W\cup\{u,v\}$, every degree-two
vertex of $B$ belongs to $W\cup\{u,v\}$.  The paths are vertex-disjoint,
so $\Delta(B)\le2$ and $|Z|\le |W|+2=k'+2\le k+1$.
The ends of every $Q_i$ lie in $A_1$ and $A_2$, so neither $u$ nor
$v$ can have degree one in $B$.  Thus
$\dd_B(u),\dd_B(v)\in\{0,2\}$.
\end{proof}

\begin{claim}\label{clm:new-system}
The graph $H:=G-\E(B)$ contains a spanning system
$\mathcal P=\{P_1,\dots,P_s\}$ of $s$ internally vertex-disjoint
$(u,v)$-paths.
\end{claim}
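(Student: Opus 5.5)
We build the $s$ paths inside $H$ by hand, using Lemma~\ref{lem:reserve} to supply Hamiltonian cycles in $H-R$ for small vertex sets $R$; the three parts of the lemma match the possible values of $s$. Throughout, $B$ satisfies \eqref{eq:B-properties}, so Lemma~\ref{lem:reserve} applies with this $B$ and $H=G-\E(B)$.

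\textbf{Case $s=1$.} We need a Hamiltonian $(u,v)$-path in $H$.
\begin{itemize}[leftmargin=2em]
\item If $uv\in\E(H)$, part~\ref{reserve:i} with $R=\emptyset$ gives a Hamiltonian cycle of $H$, but that is not enough. Instead we use part~\ref{reserve:ii} with $R=\emptyset$ (allowed, since $t\ge3$) to get a Hamiltonian cycle of $H-uv$.
\item In general we will instead modify the graph. Add to $H$ a new vertex $w$ adjacent to $u$ and $v$ only; a Hamiltonian cycle through $w$ then corresponds to the desired path. This breaks the degree form of the lemma, though.
\item So we argue through the degree sequence. Every vertex has $\dd_H\ge\delta(G)-2$ and only $|Z|\le k+1$ vertices lose two. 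We check the hypothesis of Lemma~\ref{lem:HC} ($\psi_j(H)<j-1$ for $2\le j\le\floor{n/2}$) using the same parity bookkeeping as in the proof of Lemma~\ref{lem:reserve}, and conclude that $H$ is Hamiltonian-connected.
\end{itemize}
Indeed $2\delta(G)-n\ge t\ge3$ gives $\delta(G)-2\ge\floor{n/2}$ except in boundary parities, where the exceptional vertices number at most $k+1<\floor{n/2}-1$ by \eqref{eq:qbound}.

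\textbf{Case $s\ge2$.} Choose $s-2$ vertices $x_1,\dots,x_{s-2}\notin\{u,v\}$ that are adjacent in $H$ to both $u$ and $v$. Such vertices exist because $|N_H(u)\cap N_H(v)|\ge 2(\delta(G)-2)-(n-2)\ge t-2$; we would also like them outside $Z$, and expect roughly $t-2$ choices once the lower bound on $\delta$ is used carefully. These give the short paths $ux_iv$. Put $R=\{x_1,\dots,x_{s-2}\}$, so $|R|\le t-2$. It remains to find two internally disjoint $(u,v)$-paths covering $H-R$, that is, a Hamiltonian cycle of $H-R$ through $u$ and $v$ that we split at $u$ and $v$. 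A plain Hamiltonian cycle from part~\ref{reserve:i} passes through $u$ and $v$ automatically, since it spans $H-R$. So part~\ref{reserve:i} alone suffices for every $2\le s\le t$, giving the paths $ux_iv$ together with the two arcs of the cycle.

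The subtle point is that the arcs must have internal vertices, i.e.\ $u$ and $v$ must not be consecutive on the cycle. If $uv\in\E(H)$, this is where parts~\ref{reserve:ii} and~\ref{reserve:iii} come in: apply the lemma to $(H-R)-uv$ so that the cycle avoids the edge $uv$. When $|R|\le t-3$ we use part~\ref{reserve:ii}, which needs $\dd_B(u),\dd_B(v)\in\{0,2\}$, as guaranteed by \eqref{eq:B-properties}. When $|R|=t-2$ (so $s=t$):
\begin{itemize}[leftmargin=2em]
\item if $2\delta(G)-n\ge t+1$, the extra slack lets the argument of part~\ref{reserve:ii} go through;
\item if $2\delta(G)-n=t$ and $\dd_B(u)=\dd_B(v)=2$, we use part~\ref{reserve:iii};
\item in the remaining mixed case where one of $\dd_B(u),\dd_B(v)$ is $0$, that endpoint has degree at least $q+1$ in $(H-R)-uv$, so the Pósa count of part~\ref{reserve:iii} improves and Lemma~\ref{lem:Posa} still applies.
\end{itemize}
A cleaner alternative when $uv\in\E(H)$ is to let one $x_i$ play the role of the path $uv$ itself. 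That is not allowed, however, since paths need internal vertices only if required; the definition permits the path $uv$ as one of the $P_i$. If so, we simply use the edge $uv$ as one path and reduce $s$ by one, and this sidesteps the obstacle entirely.

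The main obstacle is the common-neighbour count together with the edge-$uv$ issue at $|R|=t-2$, which is exactly the tight case that part~\ref{reserve:iii} was designed to handle.
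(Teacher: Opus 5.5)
Your $s=1$ argument (Lemma~\ref{lem:HC} applied to $H$), your $s=2$ argument, and your treatment of the case $uv\notin\E(H)$ essentially agree with the paper. The case $uv\in\E(H)$, however, has a genuine gap.

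Your count $|\N_H(u)\cap\N_H(v)|\ge2(\delta(G)-2)-(n-2)\ge t-2$ is valid only when $uv\notin\E(H)$. If $uv\in\E(H)$, then $v\in\N_H(u)$ and $u\in\N_H(v)$, and the correct bound is $|\N_H(u)\cap\N_H(v)|\ge\dd_H(u)+\dd_H(v)-n\ge2\delta(G)-n-\dd_B(u)-\dd_B(v)$. When $\dd_B(u)=\dd_B(v)=2$ this is only $t-4$. Nothing in the hypotheses stops the four $B$-edges at $u$ and $v$ from each removing a different common neighbour. So for $s\in\{t-1,t\}$ you may be unable to choose the $s-2$ vertices $x_i$ at all.

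Meanwhile, the difficulty you do address is not real. A $(u,v)$-path may be the single edge $uv$, so if a Hamiltonian cycle of $H-R$ uses $uv$, its two arcs are still valid paths. Parts~\ref{reserve:ii} and~\ref{reserve:iii} of Lemma~\ref{lem:reserve} are not needed for that purpose.

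What is missing is the paper's two-step repair.

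\textbf{Step 1.} When $uv\in\E(H)$, use the edge $uv$ itself as one of the $s$ paths. Then only $s-3$ common neighbours are needed, and the Hamiltonian cycle must now avoid $uv$. This is what Lemma~\ref{lem:reserve}\ref{reserve:ii} supplies. Step 1 covers everything except $s=t$ with $|\N_H(u)\cap\N_H(v)|=t-4$. Your remark that using $uv$ ``sidesteps the obstacle entirely'' overlooks exactly this case, since $t-3$ common neighbours would still be required.

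\textbf{Step 2.} In the extremal case, equality forces $2\delta(G)-n=t$ and $\dd_B(u)=\dd_B(v)=2$, and each of the four $B$-edges at $u,v$ removes a distinct common $G$-neighbour. The paper then proceeds as follows:
\begin{enumerate}
\item Pick such a $z$ with $uz\in\E(B)$ and $vz\in\E(H)$.
\item Pick $a\in\N_H(u)\cap\N_H(z)$ outside $R\cup\{v\}$. There are at least $t-2$ candidates because $uz\notin\E(H)$.
\item Form the length-three path $uazv$.
\item With $R'=R\cup\{a,z\}$ of size $t-2$, Lemma~\ref{lem:reserve}\ref{reserve:iii} gives a Hamiltonian cycle of $(H-R')-uv$.
\end{enumerate}
Your proposal never builds a path of length three, and without that construction (or a substitute) the case $s=t$ is not covered.
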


\begin{proof}[Proof of Claim~\ref{clm:new-system}]
We first show that $H$ is Hamiltonian-connected.  By
\eqref{eq:B-properties}, $\delta(H)\ge \delta(G)-2\ge\floor{n/2}$,
whereas every vertex outside $Z$ has degree at least
$\delta(G)-1\ge\floor{n/2}+1$.  Put $q:=\floor{n/2}$.  Since
$n\ge2k+t+3\ge2k+6$, we have $q\ge k+3$.  Therefore
\[
 \psi_j(H)=0\quad(2\le j\le q-1),
 \qquad
 \psi_q(H)\le|Z|\le k+1<q-1.
\]
Lemma~\ref{lem:HC} implies that $H$ is Hamiltonian-connected.  This
settles the case $s=1$.  For $s=2$, choose an edge $xy\in\E(H)$.  Since
$|\V(H)|\ge3$, a Hamiltonian $x$--$y$ path in $H$ does not use $xy$;
adding $xy$ therefore forms a Hamiltonian cycle.  Its two $u$--$v$
arcs give the required two paths.

Assume from now on that $3\le s\le t$.

\medskip
\noindent
\emph{Subcase 3.1: $uv\notin\E(H)$.}

Then
\begin{align*}
 |\N_H(u)\cap\N_H(v)|
 &\ge \dd_H(u)+\dd_H(v)-(n-2)\\
 &\ge2\delta(G)-n+2-\dd_B(u)-\dd_B(v)\\
 &\ge t-2.
\end{align*}
Choose $R\subseteq\N_H(u)\cap\N_H(v)$ with $|R|=s-2$.
Lemma~\ref{lem:reserve}\ref{reserve:i} gives a Hamiltonian cycle in
$H-R$.  Its two $u$--$v$ arcs, together with the paths $uxv$ for all
$x\in R$, form the required spanning system.

\medskip
\noindent
\emph{Subcase 3.2: $uv\in\E(H)$.}

Then
\begin{equation}\label{eq:common}
 |\N_H(u)\cap\N_H(v)|
 \ge2\delta(G)-n-\dd_B(u)-\dd_B(v)\ge t-4.
\end{equation}
If $|\N_H(u)\cap\N_H(v)|\ge s-3$, choose
$R\subseteq\N_H(u)\cap\N_H(v)$ with $|R|=s-3$.
By
Lemma~\ref{lem:reserve}\ref{reserve:ii}, $(H-R)-uv$ has a
Hamiltonian cycle.  Its two $u$--$v$ arcs, the edge $uv$, and the
paths $uxv$ for $x\in R$ give the desired $s$ paths.

Otherwise, \eqref{eq:common} and $s\le t$ give
\[
 t-4\le|\N_H(u)\cap\N_H(v)|\le s-4\le t-4.
\]
Hence
\[
 s=t,
 \qquad |\N_H(u)\cap\N_H(v)|=t-4.
\]
Equality in \eqref{eq:common} forces
\begin{equation}\label{eq:exception-equality}
                  2\delta(G)-n=t,
       \qquad \dd_B(u)=\dd_B(v)=2.
\end{equation}
Because $uv\in\E(G)$,
$|\N_G(u)\cap\N_G(v)|\ge2\delta(G)-n=t$.  Deleting $\E(B)$ can remove
at most $\dd_B(u)+\dd_B(v)=4$ vertices from the common neighborhood of
$u$ and $v$.  Hence equality holds throughout, and each of the four
edges of $B$ incident with $u$ or $v$ removes a distinct common neighbor.
Consequently, without loss of generality, we may choose a vertex
$z\in\N_G(u)\cap\N_G(v)$ such that $uz\in\E(B)$ and $vz\in\E(H)$.
Since $uz\notin\E(H)$,
$|\N_H(u)\cap\N_H(z)|\ge2(\delta(G)-2)-(n-2)=t-2$.
Choose $R\subseteq\N_H(u)\cap\N_H(v)$ with $|R|=t-4$, and then choose
$a\in(\N_H(u)\cap\N_H(z))\setminus(R\cup\{v\})$.
The path $uazv$ is internally disjoint from all $uxv$, $x\in R$.
Put $R':=R\cup\{a,z\}$.  Then $|R'|=t-2$, and
Lemma~\ref{lem:reserve}\ref{reserve:iii} gives a Hamiltonian cycle in
$(H-R')-uv$.  Its two $u$--$v$ arcs, the edge $uv$, the path $uazv$,
and the $t-4$ paths $uxv$ form exactly $t=s$ spanning paths.
\end{proof}

Put $P:=P_1\cup\cdots\cup P_s$ and $G^*:=G-\E(P)$.
Since $\E(P)\subseteq\E(H)$ and $H=G-\E(B)$,
\begin{equation}\label{eq:disjoint}
                         \E(P)\cap\E(B)=\varnothing.
\end{equation}

\begin{claim}\label{clm:connectivity}
The graph $G^*$ is $k$-connected.
\end{claim}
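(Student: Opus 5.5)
We argue by contradiction.  Suppose there is $X\subseteq\V(G)$ with $|X|\le k-1$ such that $G^*-X$ is disconnected.  The key observation is that $G^*$ still contains every edge of $B$, by \eqref{eq:disjoint}, and every edge of $G'=G-\E(P_0)$ that does not lie on $P$.  So the structure of $G$, $G_1$, $G_2$, $W$ and the reserved paths $Q_1,\dots,Q_k$ should still force connectivity.  First we bound components of $G^*-X$ from below.  Every vertex $x\notin\{u,v\}$ has $\dd_P(x)=2$, so
\[
\dd_{G^*-X}(x)\ge\delta(G)-2-(k-1).
\]
Since $|X|+2\le k+1$ and every component has at least four vertices (as in the proof of Claim~\ref{clm:components}), each component contains a vertex other than $u,v$.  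Hence each component has at least $\delta(G)-k$ vertices.  Comparing two such components with $n-|X|$ and using $3\delta(G)\ge n+3k+5$ rules out three components.  With two components $C_1,C_2$, the same count gives $|\V(C_i)|\le n-\delta(G)+1$.

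Next we show that each $C_i$ is essentially a dense side.  For $x\in\V(C_1)\setminus\{u,v\}$ we have $|\N_G(x)\cap \V(C_2)|\le 2$, because only the two $P$-edges at $x$ can leave.  Therefore
\[
|\N_G(x)\cap\V(C_1)|\ge\delta(G)-2-|X|.
\]
Since $|\V(C_1)|\le n-\delta(G)+1$, almost every vertex of $C_1$ is adjacent in $G$ to almost all of $C_1$.  The plan is then to use the $k$ disjoint reserved paths.  Among $Q_1,\dots,Q_k$, at least one, say $Q_j$, avoids $X$, because $|X|\le k-1$, and $Q_j\subseteq B\subseteq G^*$.  Its endpoints lie in $A_1\subseteq \V(G_1)$ and $A_2\subseteq\V(G_2)$.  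So it suffices to show two things.  First, each of $\V(G_1)\setminus X$ and $\V(G_2)\setminus X$ lies in a single component of $G^*-X$.  Second, these components are joined by $Q_j$.

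To show $G_i-X$ lies in one component of $G^*-X$, we use that $G_i$ is a component of $G'-W$.  Every vertex $x\in\V(G_i)\setminus\{u,v\}$ has at least $\delta(G)-2-k'$ neighbors inside $G_i$ in $G'$, and $|\V(G_i)|\le n-\delta(G)+1$.  Hence two vertices of $G_i$ have at least
\[
2(\delta(G)-2-k')-(n-\delta(G)+1)
\]
common $G'$-neighbours in $G_i$.  This is large by the bounds on $\delta(G)$.  After deleting the at most two $P$-edges at each vertex and the set $X$, the two vertices still have a common neighbour or an edge in $G^*-X$.  The vertices $u,v$ have degree at least $\delta(G)-t-|X|\ge k+3$ in $G^*-X$ (as in Case~1), so they cannot form a small separate piece.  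They attach to whichever side contains one of their neighbours.  Finally, vertices of $W\setminus X$ have $\delta(G)-t$ neighbours in $G^*$.  At least one such neighbour lies outside $X\cup W\cup\{u,v\}$, so each vertex of $W\setminus X$ joins $G_1$ or $G_2$.  Together with $Q_j$, this makes $G^*-X$ connected, which is a contradiction.

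The main obstacle is the common-neighbour count inside $G_i$.  The bound $2(\delta(G)-k-1)-(n-\delta(G)+1)$ needs $3\delta(G)\ge n+2k+3+$(losses), which the inequality $3\delta(G)\ge n+3k+5$ from Claim~\ref{clm:components} should just cover.  The delicate part is accounting exactly for the $P$-edges: up to $2+2$ common neighbours lost, plus $X$.  If this counting fails in a boundary case, the fallback is to argue via the component sizes of $G^*-X$ directly.  Each $C_i$ must contain more than half of some $G_j$, and $Q_j$ then crosses between $C_1$ and $C_2$.
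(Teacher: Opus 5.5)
Your architecture is the same as the paper's. You put the vertices of $V(G_i)\setminus(X\cup\{u,v\})$ into one piece, join the two sides by a reserved path $Q_j$ that avoids $X$ and survives because $E(P)\cap E(B)=\varnothing$, then attach $W\setminus X$ and finally $u,v$.

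\textbf{The gap.} It lies in the step that carries all the weight: connectivity inside $G_i$. Your own count gives at least $3\delta(G)-n-5-2k'$ common $G'$-neighbours of $x,y$ in $G_i$. After losing up to $4$ to $P$-edges and up to $|X|$ to $X$, you are left with
\[
3\delta(G)-n-9-2k'-|X|\ \ge\ 3\delta(G)-n-3k-6 .
\]
The hypotheses only give $3\delta(G)\ge n+3k+5$, and equality is permitted (e.g.\ $n=6k+10-3t$, $\delta(G)=3k+5-t$ with $t\le k+1$). So for $k'=|X|=k-1$ your bound is $-1$. The inequality does not ``just cover'' the losses, and the asserted common neighbour is not established. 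Your fallback (each $C_i$ contains more than half of some $G_j$) does not explain why the two ends of the surviving $Q_j$ fall into different $C_i$, so it does not close the step either.

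\textbf{The fix.} The missing $2$ comes from using all of $V(G_i)$ as the ambient set. Suppose $x,y\in V(G_i)\setminus(X_i\cup\{u,v\})$ are non-adjacent in $G^*$. Their neighbourhoods in $G^*[V(G_i)]-X_i$ then lie in $V(G_i)\setminus(X_i\cup\{x,y\})$. Each has size at least $\delta(G)-k'-4-|X_i|$: at most two edges go to $G_{3-i}$, at most $k'$ to $W$, and two edges lie on $P$. Hence $x,y$ share at least $2\delta(G)-2k'-6-|X_i|-|V(G_i)|\ge 3\delta(G)-n-3k-4\ge1$ neighbours. This is exactly the inequality behind the paper's argument, which phrases it as a component count. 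There, every such vertex has degree at least $(|V(G_i)|+k-2)/2-|X_i|$ in $G^*[V(G_i)]-X_i$. So two components containing such vertices would together have at least $|V(G_i)|+k-2|X_i|>|V(G_i)|-|X_i|$ vertices, which is impossible.

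\textbf{Two smaller corrections.}
\begin{itemize}
\item For $u,v$, Case 2 only gives $\delta(G)-t-|X|\ge3$, not $k+3$ (take $t\ge k+2$, $n=2k+t+3$, $\delta(G)=k+t+2$). This still suffices.
\item For $w\in W\setminus(X\cup\{u,v\})$ you must use $d_{G^*}(w)=d_G(w)-2$. With your bound $\delta(G)-t$, the number of neighbours outside $X\cup W\cup\{u,v\}$ is only guaranteed to be $\delta(G)-t-k-k'$. In the same example with $k'=k-1$ this equals $3-k$. By contrast, $\delta(G)-2-k-k'\ge\delta(G)-2k-1\ge3$.
\end{itemize}
Finally, your opening analysis of the components $C_1,C_2$ of $G^*-X$ is never used.
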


\begin{proof}[Proof of Claim~\ref{clm:connectivity}]
Let $X\subseteq\V(G)$ with $|X|\le k-1$, and put
$X_i:=X\cap\V(G_i)$ for $i=1,2$.
For
$x\in\V(G_i)\setminus\{u,v\}$, every edge from $x$ to
$G_{3-i}$ belongs to $P_0$, because $G_1,G_2$ are components of
$(G-\E(P_0))-W$.  Thus there are at most two such edges, and $x$ has
at most $k'$ neighbors in $W$.  The new path system removes at most
two further edges incident with $x$ inside $G_i$.  Hence
\[
 \dd_{G^*[\V(G_i)]}(x)
 \ge \delta(G)-k'-4\ge \delta(G)-k-3.
\]
We now derive the numerical estimate needed at this point.  If $n+t$
is odd, then, using $n\ge6k+9-3t$,
\[
 3\delta(G)\ge\frac{3(n+t+1)}2\ge n+3k+5.
\]
If $n+t$ is even, then $n=6k+9-3t$ is impossible because
$6k+9-2t$ is odd.  Thus $n\ge6k+10-3t$, and again
\[
 3\delta(G)\ge\frac{3(n+t)}2\ge n+3k+5.
\]
Together with the upper bound in Claim~\ref{clm:components}, this gives
\[
 \delta(G)-k-3\ge\frac{|\V(G_i)|+k-2}{2};
\]
indeed, this is equivalent to
\[
 3\delta(G)\ge n+3k+5.
\]
Consequently
\begin{equation}\label{eq:side-degree}
 \dd_{G^*[\V(G_i)]}(x)
 \ge\frac{|\V(G_i)|+k-2}{2}
 \quad(x\in\V(G_i)\setminus\{u,v\}).
\end{equation}

We claim that all vertices of
$\V(G_i)\setminus(X_i\cup\{u,v\})$ lie in one component $C_i$ of
$G^*[\V(G_i)]-X_i$.  Otherwise two components $C,D$ both contain
vertices outside $\{u,v\}$.  By \eqref{eq:side-degree},
\[
 |C|,|D|\ge\frac{|\V(G_i)|+k}{2}-|X_i|.
\]
Therefore
\[
 |\V(G_i)|-|X_i|
 \ge |C|+|D|
 \ge |\V(G_i)|+k-2|X_i|,
\]
which implies $|X_i|\ge k$, a contradiction.  The component $C_i$ is
nonempty because
$|A_i|\ge k+2$ and $|X_i|\le k-1$.

The paths $Q_1,\dots,Q_k$ are vertex-disjoint.  Since
$|X|\le k-1$, at least one of them, say $Q_j$, avoids $X$.  Its ends
belong to $C_1$ and $C_2$.  By \eqref{eq:disjoint}, the whole path
$Q_j$ survives in $G^*$.  Thus $C_1$ and $C_2$ lie in one component,
denoted by $C$, of $G^*-X$.

Let $w\in W\setminus(X\cup\{u,v\})$.  Exactly two edges of $P$ are
incident with $w$, and $w$ has at most $k'-1$ neighbors in $W$.
Therefore
\[
 |\N_{G^*}(w)\cap(\V(G_1)\cup\V(G_2))|
 \ge \delta(G)-k'-1\ge \delta(G)-k.
\]
After excluding $X$ and $u,v$, at least $\delta(G)-2k-1$ neighbors
remain.  If $3\le t\le k+1$, then
$\delta(G)\ge3k+5-t\ge2k+4$, whereas if $t\ge k+2$, then
$\delta(G)\ge k+t+2\ge2k+4$.  Hence $\delta(G)-2k-1\ge3$.
These neighbors lie in $C$, so every
vertex of $W\setminus(X\cup\{u,v\})$ belongs to $C$.

At this stage every vertex of $G^*-X$ except possibly $u,v$ lies in
$C$.  For $z\in\{u,v\}\setminus X$,
\[
 \dd_{G^*-X}(z)
 \ge \delta(G)-s-(k-1)
 \ge \delta(G)-k-t+1.
\]
If $3\le t\le k+1$, then
$\delta(G)\ge3k+5-t\ge k+t+2$,
whereas if $t\ge k+2$, then
$\delta(G)\ge k+t+2$.  Therefore $\delta(G)-k-t+1\ge3$.
Outside $C$ there is at most the other vertex of $\{u,v\}$, so $z$
has a neighbor in $C$.  Thus $G^*-X$ is connected.  Since $X$ was
arbitrary, $\kk(G^*)\ge k$.
\end{proof}

Claim~\ref{clm:connectivity} completes the proof of the theorem.
\end{proof}

\section{Concluding remarks}

Theorem~\ref{thm:main} shows that in sufficiently large $k$-connected graphs,
for all distinct vertices $u,v$, the degree condition $\delta(G)\ge\ceil{(n+t)/2}$
guarantees spanning $(u,v)$-path systems whose edge deletion preserves
$k$-connectivity.  This gives an affirmative answer to the question of Teng
and Tian~\cite{TengTian} and also improves their order hypothesis.  The
resulting order bound $n\ge\max\{6k+9-3t,\;2k+t+3\}$ cannot be lowered using
the estimates in the present proof; any further improvement would require
sharper estimates or a different argument.

\medskip
\noindent\textbf{Declaration of AI Use.}
ChatGPT was used to assist in developing and checking the constructions
and proofs.  The author independently verified all mathematical arguments,
wrote the paper, and takes full responsibility for its content.

\end{document}